\documentclass{amsart}

\usepackage{bbm}
\usepackage{hyperref}

\newcommand*{\mailto}[1]{\href{mailto:#1}{\nolinkurl{#1}}}
\newcommand{\arxiv}[1]{\href{http://arxiv.org/abs/#1}{arXiv:#1}}

\newtheorem{theorem}{Theorem}[section]

\newtheorem{lemma}[theorem]{Lemma}
\newtheorem{example}[theorem]{Example}

\newtheorem{corollary}[theorem]{Corollary}
\newtheorem{hypothesis}[theorem]{Hypothesis}
\newtheorem{remark}[theorem]{Remark}

\newcommand{\R}{{\mathbb R}}
\newcommand{\N}{{\mathbb N}}

\newcommand{\C}{{\mathbb C}}


\newcommand{\be}{\begin{equation}}
\newcommand{\ee}{\end{equation}}

\newcommand{\cH}{\mathcal{H}}
\newcommand{\cD}{\mathcal{D}}
\newcommand{\cF}{\mathcal{F}}
\newcommand{\cG}{\mathcal{G}}
\newcommand{\cA}{\mathcal{A}}
\newcommand{\cL}{\mathcal{L}}
\newcommand{\cJ}{\mathcal{J}}

\newcommand{\gH}{\mathfrak{H}}

\newcommand{\gt}{\mathfrak{t}}

\newcommand{\I}{\mathrm{i}}

\newcommand{\im}{\mathrm{Im\,}}

\newcommand{\dom}{\mathrm{dom}}
\newcommand{\loc}{\mathrm{loc}}
\newcommand{\ran}{\mathrm{ran}}


\numberwithin{equation}{section}


\begin{document}

\title[$J$-positive block operator matrices]{A note on $J$-positive block operator matrices}

\author[A.\ Kostenko]{Aleksey Kostenko}
\address{Faculty of Mathematics\\ University of Vienna\\
Oskar--Morgenstern--Platz 1\\ 1090 Wien\\ Austria}

\thanks{{\it Research supported by the Austrian Science Fund (FWF) under Grant No.\  P26060.}}

\keywords{Block operator matrix, J-self-adjoint operator, J-positive operator, eigenfunction expansion}
\subjclass[2010]{Primary 47B50; Secondary 47A40; 47B15; 34L10}

\begin{abstract}
We study basic spectral properties of $\cJ$-self-adjoint $2\times 2$ block operator matrices. Using the linear resolvent growth condition, we obtain simple necessary conditions for the regularity of the critical point $\infty$. In particular, we present simple examples of operators having the singular critical point $\infty$. Also, we apply our results to the linearized operator arising in the study of soliton type solutions to the nonlinear relativistic Ginzburg--Landau equation. 
\end{abstract}

\maketitle

\section{Introduction}\label{sec:intro}

Let $\cH$ be a complex separable Hilbert space. Consider the following operators defined in $\gH=\cH\times\cH$ by the block operator matrices
\be\label{eq:cA}
\cL=\begin{pmatrix}
\ \I C & \I B \\
\ -\I A & \ -\I C^*
\end{pmatrix},\qquad \cA=\begin{pmatrix}
A & C^* \\
C & B
\end{pmatrix}.
\ee
Note that $\cL=\cJ\cA$, where 
\be\label{eq:fs}
\cJ=\cJ^\ast=\cJ^{-1}=\begin{pmatrix}
0 & \I I\\
-\I I & 0
\end{pmatrix}
\ee
is a fundamental symmetry on $\cH\times\cH$. Also, $I$ stands for the identity operator on $\cH$.
The operators $A$, $B$ and $C$ are not assumed to be bounded. In order to define the operators $\cA$ and $\cL$ correctly we shall assume the following.

\begin{hypothesis}\label{hyp:01}\
\begin{itemize}
\item[(i)] 
$A$ is closed with $0\in\rho(A)$ and $\kappa_-(A)<\infty$,
\item[(ii)] $C$ is closed, $\dom(A)\subset\dom(C)$ and $\dom(A)\subset\dom(C^*)$,
\item[(iii)] $B=B^\ast$,
\item[(iv)] $\dom(S_0):=\dom(C^*)\cap\dom(B)$ is dense in $\cH$ and the operator 
\be\label{eq:S}
S_0:=B- CA^{-1}C^\ast
\ee
 is essentially self-adjoint on $\dom(S_0)$ with $\kappa_-(\overline{S_0})<\infty$.
\end{itemize}
\end{hypothesis}

Here $\kappa_-(T)=\dim\ran \chi_{(-\infty,0)}(T)$. Note that $\kappa_-(T)$ is the number of negative eigenvalues of $T$ if  $\kappa_-(T)<\infty$. 

Under the assumptions of Hypothesis \ref{hyp:01}, the operator $\cA_0$ 
defined on $\dom(\cA_0)=\dom(A)\times\dom(S_0)$ is essentially self-adjoint (see Theorem \ref{th:shkal}). If additionally the operators $A$ and $S_0$ are positive, then so is the operator $\cA_0$. Moreover, the operator $\cL_0$ defined by $\cL_0=\cJ\cA_0$ on $\dom(\cL_0)=\dom(\cA_0)$ is closable and essentially $\cJ$-self-adjoint. 
 Let us denote by $\cA$ and $\cL$ the closures of $\cA_0$ and $\cL_0$, respectively.

Operators $\cA$ and $\cL$ arise in various areas of mathematical physics and hydrodynamics. In particular, the spectral properties of $\cA$ has been studied in \cite{shk95}, \cite[Chapter II]{tre} (see also references therein). Note also that our choice of the fundamental symmetry \eqref{eq:fs} is motivated by applications to the study of asymptotic stability of solutions of nonlinear wave equations. More precisely, in \cite{kk11}, \cite{kk11b}, \cite{kk13}, 
 the operator $\cL $ defined on $\gH=L^2(\R)\times L^2(\R)$ by \eqref{eq:cA} with 
\be\label{eq:kk}
A=-\frac{d^2}{dx^2}+m^2+V(x),\qquad C=\nu\frac{d}{dx},\qquad B=I,
\ee
was studied in connection with the problem of asymptotic stability of solutions of the nonlinear relativistic Ginzburg--Landau equation. The authors of \cite{kk13} were interested in the eigenfunction expansion properties for $\cL$, which was used in \cite{bs}, \cite{kk11b} for the calculation of the Fermi Golden Rule (this condition ensures a strong coupling of discrete and continuous spectral components of solutions, which provides the energy radiation to infinity and results in the asymptotic stability of solitary waves). If $|\nu|\in[0,1)$,  $V\to 0$ as $x\to \infty$ and under certain positivity assumptions on $A$ and $S_0$, it was shown in \cite{kk13} that the operator $\cL$ is positive and the eigenfunction expansion was constructed for all functions from the energy space $\gH_\cA$. However, the question on the eigenfunction expansion properties in the initial Hilbert space $\gH=L^2(\R)\times L^2(\R)$ was left to be open. It is one of our main aims to investigate this problem. 

On the other hand, under the assumptions of Hypothesis \ref{hyp:01}, the operator $\cL$ defined by \eqref{eq:cA} and \eqref{eq:kk} is definitizable (see Theorem \ref{lem:l_def}). Therefore (see \cite{lan84}), the problem on the eigenfunction expansion properties is equivalent to the regularity of critical points of the operator $\cL$. It turns out that the operator $\cL$  with coefficients \eqref{eq:kk} has a singular critical point $\infty$ (Theorem \ref{lem:l_def}). First of all, this result shows that the results obtained in \cite{kk13} are optimal in a certain sense. On the other hand, studying the spectral properties of the block operator matrix \eqref{eq:cA}, we are able to construct a class of $\cJ$-positive operators with the singular critical point $\infty$ (see Example \ref{ex:03}). The special case when all coefficients $A$, $B$ and $C$ are functions of a self-adjoint operator $T$ on $\cH$ (and hence they are commutative) was studied in \cite{j1}, \cite{j2}. 

Let us now briefly describe the content of the paper. In Section \ref{sec:2}, we recall basic facts from \cite{shk95} and \cite{tre} on spectral properties of the operator $\cA$. Section \ref{sec:3} deals with the spectral properties of the $\cJ$-self-adjoint operator $\cL$. We describe the spectrum of $\cL$, provide sufficient conditions for its definitizability and obtain a necessary condition for the similarity of $\cL$ to a self-adjoint operator. We demonstrate our findings by examples. For instance, we present a class of $2\times 2$ block operator matrices with the singular critical point $\infty$. In the final Section \ref{sec:4}, we study the spectral properties of the operator $\cL$ defined by \eqref{eq:cA} and \eqref{eq:kk}. The main result of this section, Theorem \ref{lem:l_def} states that the operator $\cL$ is definitizable and $\infty$ is a singular critical point if the potential $V$ satisfies \eqref{eq:Vcomp}.   

\section{Self-adjointness of the operator matrix $\cA$}\label{sec:2}
   
In this section we collect some information on basic spectral properties of the operator $\cA$ defined by \eqref{eq:cA}. We begin with the following result from \cite{shk95} (see also \cite[Chapter II.2]{tre}).

\begin{theorem}[\cite{shk95}]\label{th:shkal}
Assume that the operators $A$, $B$, $C$ satisfy the assumptions of Hypothesis \ref{hyp:01}. Then the operator $\cA_0:\cH\times\cH \to \cH\times\cH$,
\be
\cA_0\begin{pmatrix}
f_1 \\
f_2
\end{pmatrix}:=\begin{pmatrix}
Af_1 + C^*f_2\\
Cf_1+Bf_2
\end{pmatrix},\quad f\in\dom(\cA_0):=\dom(A)\times\dom(S_0),
\ee
is essentially self-adjoint.
\end{theorem}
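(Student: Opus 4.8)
The plan is to reduce the self-adjointness of the full block matrix to that of its diagonal part by means of the Frobenius--Schur (LDU) factorization, whose middle factor is governed by the Schur complement $S_0$. Concretely, writing
\[
L := \begin{pmatrix} I & 0 \\ CA^{-1} & I\end{pmatrix}, \qquad
\mathcal{S} := \begin{pmatrix} A & 0 \\ 0 & \overline{S_0}\end{pmatrix},
\]
I expect the identity $\overline{\cA_0} = L\,\mathcal{S}\, L^*$ to hold, where the Hilbert-space adjoint $L^*$ has upper-right corner acting as $A^{-1}C^*$ on $\dom(C^*)$. Since $A=A^*$ (implicit in Hypothesis \ref{hyp:01}(i), as $\kappa_-(A)$ refers to a spectral projection) and $\overline{S_0}$ is self-adjoint by Hypothesis \ref{hyp:01}(iv), the diagonal operator $\mathcal{S}$ is self-adjoint on $\dom(A)\times\dom(\overline{S_0})$; the factorization then transports this self-adjointness to $\cA_0$.

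First I would record that $G := CA^{-1}$ is bounded. Indeed, $A^{-1}$ is bounded with range $\dom(A)\subset\dom(C)$ by Hypothesis \ref{hyp:01}(ii), so $G$ is everywhere defined, and it is closed as the product of the closed operator $C$ with the bounded $A^{-1}$; the closed graph theorem yields boundedness. Consequently $L$ and $L^{-1}=\left(\begin{smallmatrix} I & 0 \\ -G & I\end{smallmatrix}\right)$ are bounded everywhere-defined operators on $\gH$, and so are $L^*$ and $(L^*)^{-1}$. A short duality computation, using that $A^{-1}=(A^{-1})^*$, shows $G^*\supseteq A^{-1}C^*$, which identifies the action of $L^*$ on $\dom(C^*)$ in the upper corner.

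Next I would prove the elementary congruence lemma: if $\mathcal{S}=\mathcal{S}^*$ and $W$ is boundedly invertible, then $W^*\mathcal{S}W$, with domain $W^{-1}\dom(\mathcal{S})$, is self-adjoint. Symmetry is immediate, while the reverse inclusion $(W^*\mathcal{S}W)^*\subseteq W^*\mathcal{S}W$ follows by rewriting the defining identity of the adjoint as $\langle \mathcal{S}\phi, Wh\rangle=\langle\phi,(W^*)^{-1}k\rangle$ for all $\phi\in\dom(\mathcal{S})$, which forces $Wh\in\dom(\mathcal{S}^*)=\dom(\mathcal{S})$. Applying this with $W=L^*$ gives that $L\mathcal{S}L^*$ is self-adjoint. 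A direct computation on $f=(f_1,f_2)\in\dom(A)\times\dom(S_0)$ then confirms both $\cA_0 f=L\mathcal{S}L^* f$ and that $f\in\dom(L\mathcal{S}L^*)$ (here $f_2\in\dom(S_0)\subset\dom(C^*)$ ensures $A^{-1}C^* f_2\in\dom(A)$, so $L^*f\in\dom(\mathcal{S})$); hence $\cA_0\subseteq L\mathcal{S}L^*$.

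The final and most delicate step is the core property $\overline{\cA_0}=L\mathcal{S}L^*$. Given $f\in\dom(L\mathcal{S}L^*)$, I set $g=L^*f=(g_1,g_2)\in\dom(A)\times\dom(\overline{S_0})$ and invoke the essential self-adjointness of $S_0$ to choose $g_2^{(n)}\in\dom(S_0)$ with $g_2^{(n)}\to g_2$ and $S_0 g_2^{(n)}\to\overline{S_0}g_2$. Then $f^{(n)}:=(L^*)^{-1}(g_1,g_2^{(n)})=(g_1-G^*g_2^{(n)},\,g_2^{(n)})$ lies in $\dom(\cA_0)$ — again because $\dom(S_0)\subset\dom(C^*)$ makes $G^*g_2^{(n)}=A^{-1}C^*g_2^{(n)}\in\dom(A)$ — and boundedness of $L$ and $(L^*)^{-1}$ gives $f^{(n)}\to f$ together with $\cA_0 f^{(n)}=L\mathcal{S}L^* f^{(n)}\to L\mathcal{S}L^* f$. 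This shows $\cA_0$ is a core, completing the proof. I expect the main obstacle to be precisely this domain bookkeeping — keeping the approximants inside $\dom(A)\times\dom(S_0)$ — which is where the inclusions $\dom(A)\subset\dom(C)\cap\dom(C^*)$ and $\dom(S_0)\subset\dom(C^*)$ from Hypothesis \ref{hyp:01} enter decisively.
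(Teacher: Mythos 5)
Your proposal is correct and follows essentially the same route as the paper: the Frobenius--Schur factorization $\cA_0=\cF(0)\,\mathrm{diag}(A,S_0)\,\cG(0)$ with bounded, boundedly invertible triangular factors satisfying $\cG(0)\subset\cF(0)^*$, reducing everything to the essential self-adjointness of the Schur complement $S_0$ from Hypothesis \ref{hyp:01}(iv). You merely spell out at $z=0$ the congruence lemma and the core approximation that the paper leaves implicit (and your identity $\overline{\cA_0}=L\mathcal{S}L^*$ is exactly the paper's Corollary \ref{cor:a_clos}).
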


\begin{proof}
We shall give a proof because our further considerations rely on this construction. The proof is based on the Frobenius--Schur factorization
\be\label{eq:schur}
\cA_0-z=
\begin{pmatrix}
I & 0\\
C(A-z)^{-1} & I
\end{pmatrix}
\begin{pmatrix}
A-z & 0\\
0 & S(z)-z
\end{pmatrix}
\begin{pmatrix}
I & (A-z)^{-1}C^\ast\\
0 & I
\end{pmatrix},
\ee 
where
\be\label{eq:s(z)}
S(z)=B-C(A-z)^{-1}C^\ast,\quad z\in\rho(A);\qquad \dom(S(z))=\dom(S_0).
\ee
Assumption (ii) implies that the operators 
\[
F(z):=C(A-z)^{-1}\quad \text{ and}\quad G(z):=(A-z)^{-1}C^*
\]
 are bounded in $\cH$ whenever $z\in\rho(A)$. Moreover, $\dom(F)=\cH$ and the closure of $G$ is a bounded operator on $\cH$. Therefore, the operators 
\be
\cF(z)=\begin{pmatrix}
I & 0\\
C(A-z)^{-1} & I
\end{pmatrix},\qquad
\cG(z)=\begin{pmatrix}
I & (A-z)^{-1}C^\ast\\
0 & I
\end{pmatrix},
\ee
are bounded and boundedly invertible on $\cH\times\cH$. Noting also that $\cG(z)^\ast= \cF(z^\ast)$ and $\cG(z)\subset \cF(z^\ast)^\ast$, we conclude that the operator $\cA_0$ is essentially self-adjoint if and only if so is $S(0)=S_0$. It remains to exploit the assumption (iv).
\end{proof}
 
Using \eqref{eq:schur}, we can describe the closure of $\cA_0$.

\begin{corollary}[\cite{shk95}]\label{cor:a_clos}
Assume the conditions of Theorem \ref{th:shkal}. Then the closure $\cA$ of the operator $\cA_0$ is given by
\be\label{eq:A_clos}
\cA=\begin{pmatrix}
I & 0\\
F(0) & I
\end{pmatrix}
\begin{pmatrix}
A & 0\\
0 & \overline{S_0}
\end{pmatrix}
\begin{pmatrix}
I & F(0)^\ast\\
0 & I
\end{pmatrix},
\ee 
and 
\be\label{eq:doma}
\dom(\cA)=\{f=(f_1,f_2)^T:\ f_1+F(0)^\ast f_2\in\dom(A),\ f_2\in\dom(\overline{S_0})\}.
\ee 
\end{corollary}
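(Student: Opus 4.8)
The plan is to specialize the Frobenius--Schur factorization \eqref{eq:schur} to the value $z=0$ and then push the closure through the two bounded, boundedly invertible factors. Since $\cA_0$ is essentially self-adjoint by Theorem \ref{th:shkal}, it is closable and $\cA=\overline{\cA_0}$. Setting $z=0$ and recalling $S(0)=S_0$, I would first verify that on $\dom(\cA_0)=\dom(A)\times\dom(S_0)$ the identity
\[
\cA_0=\cF(0)\begin{pmatrix} A & 0\\ 0 & S_0\end{pmatrix}\cG(0)
\]
is a genuine operator equality, domains included. Indeed, for $f=(f_1,f_2)^T$ with $f_2\in\dom(S_0)\subset\dom(C^\ast)$ one has $A^{-1}C^\ast f_2\in\ran(A^{-1})=\dom(A)$, so $\cG(0)f=(f_1+A^{-1}C^\ast f_2,f_2)^T$ lies in $\dom(A)\times\dom(S_0)$ precisely when $f$ does; hence the domains on both sides coincide.

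Next I would invoke the elementary fact that closure commutes with pre- and post-multiplication by a bounded, boundedly invertible operator: if $T$ is closable and $\cU,\cV$ are bounded with bounded inverses, then $\overline{\cU T\cV}=\cU\,\overline{T}\,\cV$. This follows from a graph argument, since the maps $(y,w)\mapsto(\cU y,w)$ and $(y,w)\mapsto(y,\cV w)$ are homeomorphisms of $\gH\times\gH$ and therefore carry the graph closure to the graph closure. Applying this with $\cU=\cF(0)$, $\cV=\cG(0)$, and $T=A\oplus S_0$ gives $\cA=\overline{\cA_0}=\cF(0)\,\overline{A\oplus S_0}\,\cG(0)$. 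Because the closure of a direct sum is the direct sum of the closures and $A$ is already closed, $\overline{A\oplus S_0}=A\oplus\overline{S_0}$, which is exactly the middle factor in \eqref{eq:A_clos}.

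To match \eqref{eq:A_clos} completely I would identify the upper-right entry of $\cG(0)$. From the relation $\cG(z)^\ast=\cF(z^\ast)$ recorded in the proof of Theorem \ref{th:shkal}, the choice $z=0$ yields $\overline{G(0)}^{\,\ast}=F(0)$, that is $\overline{A^{-1}C^\ast}=F(0)^\ast$, so as a bounded operator $\cG(0)=\left(\begin{smallmatrix} I & F(0)^\ast\\ 0 & I\end{smallmatrix}\right)$. Finally, since $\cF(0)$ is everywhere defined and $\cG(0)$ is a bounded bijection, the domain of the composition is $\dom(\cA)=\{f:\cG(0)f\in\dom(A\oplus\overline{S_0})\}$; writing $\cG(0)f=(f_1+F(0)^\ast f_2,f_2)^T$, this membership is equivalent to $f_1+F(0)^\ast f_2\in\dom(A)$ together with $f_2\in\dom(\overline{S_0})$, which is precisely \eqref{eq:doma}.

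The only real content is the closure-commutation step, and the point that requires care is the bookkeeping of domains: one must confirm that the formal factorization at $z=0$ is a true operator identity with matching domains \emph{before} taking closures, and that the bounded entry $A^{-1}C^\ast$ is consistently read as its bounded closure $F(0)^\ast$. The direct-sum closure and the homeomorphism-of-graphs argument are standard, so I do not anticipate any serious obstacle beyond this careful tracking of domains.
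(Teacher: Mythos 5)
Your proposal is correct and follows essentially the route the paper intends: the corollary is stated without proof as a direct consequence of the Frobenius--Schur factorization \eqref{eq:schur} specialized to $z=0$, using exactly the facts recorded in the proof of Theorem \ref{th:shkal} (boundedness and bounded invertibility of the triangular factors, and $\overline{G(0)}=\cF(0)^\ast$ restricted to the upper-right entry, i.e.\ $\overline{A^{-1}C^\ast}=F(0)^\ast$). The one point worth tightening is that $\cG(0)$ itself is defined only on $\cH\times\dom(C^\ast)$, so the graph-homeomorphism argument must be run with its everywhere-defined bounded closure $\overline{\cG(0)}=\bigl(\begin{smallmatrix} I & F(0)^\ast\\ 0 & I\end{smallmatrix}\bigr)$ --- which is what you in effect do when you replace $A^{-1}C^\ast$ by $F(0)^\ast$ in the final formula.
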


We also need the following description of the spectrum of $\cA$. In what follows we shall use the following notation:
\begin{align}
\begin{split}
&\sigma(\overline{S}):=\{z\in\C:\, z\in\sigma(\overline{S(z)})\},\\
 &\sigma_i(\overline{S}):=\{z\in\C:\, z\in\sigma_i(\overline{S(z)})\},\quad i\in\{{\rm p,c,ess}\}.
\end{split}
\end{align}

\begin{corollary}\label{cor:A_spec}
Assume the conditions of Theorem \ref{th:shkal}. Let also $\cA$ and $\overline{S(z)}$ be the closures of $\cA_0$ and $S(z)$, respectively. Then 
\be\label{eq:s_A}
\sigma(\cA)\setminus\sigma(A)=\sigma(\overline{S}),\quad \sigma_i(\cA)\setminus\sigma(A)=\sigma_i(\overline{S}),\ \ i\in\{p,c\}.
\ee
In particular, the operator $\cA$ is (uniformly) positive if and only if so are $A$ and $S_0$. Moreover,
\be\label{eq:kappa}
\kappa_-(\cA)=\kappa_-(A)+\kappa_-(\overline{S_0}).
\ee
\end{corollary}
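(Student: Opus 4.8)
The plan is to derive everything from the Frobenius--Schur factorization \eqref{eq:schur} by passing to closures. Since $\cF(z)$ and $\cG(z)$ are bounded and boundedly invertible on $\cH\times\cH$ for every $z\in\rho(A)$, multiplication by them commutes with operator closure, and the closure of the decoupled middle factor is $\mathrm{diag}(A-z,\overline{S(z)}-z)$ (here $A-z$ is already closed since $A$ is). Taking closures in \eqref{eq:schur} therefore gives
\be
\cA-z=\cF(z)\begin{pmatrix} A-z & 0\\ 0 & \overline{S(z)}-z\end{pmatrix}\cG(z),\qquad z\in\rho(A).
\ee
For $z\notin\sigma(A)$ the block $A-z$ is boundedly invertible, so $\cA-z$ is boundedly invertible if and only if $\overline{S(z)}-z$ is; this yields $\sigma(\cA)\setminus\sigma(A)=\sigma(\overline{S})$.

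First I would treat the point and continuous parts. Because $\cF(z)$ and $\cG(z)$ are homeomorphisms of $\cH\times\cH$, conjugation by them preserves injectivity, density of the range, and closedness of the range. Hence for $z\notin\sigma(A)$ the operator $\cA-z$ is injective, respectively has dense range, respectively has closed range, exactly when the middle factor does; and since $A-z$ is a bijection, the middle factor inherits each property from its lower block $\overline{S(z)}-z$. Reading off the definitions of $\sigma_{\mathrm p}$ and $\sigma_{\mathrm c}$ (and using that $\cA$, being self-adjoint, has no residual spectrum) gives $\sigma_{\mathrm p}(\cA)\setminus\sigma(A)=\sigma_{\mathrm p}(\overline{S})$ and $\sigma_{\mathrm c}(\cA)\setminus\sigma(A)=\sigma_{\mathrm c}(\overline{S})$.

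For the positivity statement I would specialize to $z=0$ and invoke Corollary \ref{cor:a_clos}, which by \eqref{eq:A_clos} exhibits $\cA$ as a congruence $\cA=L\,D\,L^{\ast}$ with $L=\bigl(\begin{smallmatrix} I&0\\ F(0)&I\end{smallmatrix}\bigr)$ and $D=\mathrm{diag}(A,\overline{S_0})$ (the right factor in \eqref{eq:A_clos} is precisely $L^{\ast}$). The bounded bijection $L^{\ast}$ carries $\dom(\cA)$ onto $\dom(D)=\dom(A)\times\dom(\overline{S_0})$ --- this is exactly the domain description \eqref{eq:doma} --- and $\spr{\cA f}{f}=\spr{D\,L^{\ast}f}{L^{\ast}f}$. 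Consequently $\cA\ge 0$ iff $D\ge 0$ iff $A\ge 0$ and $\overline{S_0}\ge 0$; uniform positivity transfers in both directions using the two-sided bounds $\|f\|/\|(L^{\ast})^{-1}\|\le\|L^{\ast}f\|\le\|L^{\ast}\|\,\|f\|$.

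The inertia identity \eqref{eq:kappa} is the step I expect to require the most care, as it is an operator-theoretic Sylvester law of inertia for unbounded forms. The plan is to use the variational characterization
\be
\kappa_-(T)=\sup\{\dim\mathcal M:\ \mathcal M\subset\dom(T),\ \spr{Tf}{f}<0\ \text{for all}\ f\in\mathcal M\setminus\{0\}\},
\ee
valid here because $\kappa_-(A),\kappa_-(\overline{S_0})<\infty$ force the negative spectral subspaces to be finite-dimensional and contained in the respective domains. Since $L^{\ast}$ is a dimension-preserving bijection between $\dom(\cA)$ and $\dom(D)$ intertwining the two quadratic forms, the admissible subspaces correspond bijectively, whence $\kappa_-(\cA)=\kappa_-(D)$; the block-diagonal structure of $D$ then gives $\kappa_-(D)=\kappa_-(A)+\kappa_-(\overline{S_0})$. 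The one subtlety is the upper bound $\dim\mathcal M\le\kappa_-(T)$ in the variational formula, obtained by a dimension count of the projection of $\mathcal M$ against $\ran\chi_{[0,\infty)}(T)$, and it is precisely here that the finiteness of $\kappa_-$ is essential.
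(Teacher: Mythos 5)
Your proposal is correct and follows essentially the same route as the paper: the paper proves \eqref{eq:s_A} by citing \cite[Theorem 2.3.3]{tre}, whose argument is exactly your closure of the Frobenius--Schur factorization, and it disposes of the positivity and inertia claims by the same congruence observation ($\cF$, $\overline{\cG}$ bounded and boundedly invertible with $\cF(z)^\ast=\overline{\cG(z^\ast)}$) that you carry out in detail via the variational characterization of $\kappa_-$. You have merely supplied the details the paper delegates to a reference or declares obvious; no gaps.
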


\begin{proof}
For the proof of equality \eqref{eq:s_A} we refer to \cite[Theorem 2.3.3]{tre}. The second claim is obvious since the operators $\cF$ and $\overline{\cG}$ are bounded, boundedly invertible and $\cF(z)^\ast=\overline{\cG(z^\ast)}$.
\end{proof}
 
\begin{remark}
Further assumptions on coefficients of $\cA$ are required in order to extend \eqref{eq:s_A} to the case of essential spectra. For instance, 
\[
\sigma_{\rm ess}(\cA)\setminus\sigma(A)=\sigma_{\rm ess}(\overline{S})
\]
if the operator $BA^{-1}$ is bounded on $\cH$. For further details and results we refer to \cite{shk95}, \cite[Chapter II.4]{tre}.
\end{remark}

\section{On the regularity of critical points of block operator-matrices}\label{sec:3}

Assume Hypothesis \ref{hyp:01}. Since $\cL_0=\cJ\cA_0$, the operator $\cL_0$ is essentially $\cJ$-self-adjoint if conditions (i)--(iv) of Hypothesis \ref{hyp:01} are satisfied. Moreover, its closure $\cL$ is given by $\cL=\cJ\cA$, where $\cA=\cA^*=\cA_0^\ast$. We can also  describe the closure using the Frobenius--Schur factorization. To this end, for all $z\in\C$ define the operator
\be
T(z)=B-(C+\I z)A^{-1}(C^*-\I z),\qquad \dom(T(z))=\dom(C^*)\cap\dom(B).
\ee 
Note that the assumptions (i)--(iv) imply that $T(z)$ is densely defined and $T(0)=S(0)$ is essentially self-adjoint. Straightforward calculations show that
\be\label{eq:schur_L}
\cL_0-z=
\begin{pmatrix}
I & - ( C + \I z )A^{-1}\\
0 & I
\end{pmatrix}
\begin{pmatrix}
0 & \I T(z)\\
-\I A &  0 
\end{pmatrix}
\begin{pmatrix}
I &  A^{-1}(  C^\ast - \I z)\\
0 & I
\end{pmatrix}
\ee
for all $f\in\dom(A)\times\dom(S_0)$. This representation enables us to find the closure of $\cL_0$ and also to describe its spectrum (see, e.g., \cite[Chapter II.3 and Theorem 2.4.16]{tre}).

\begin{theorem}\label{th:schur_L}
Assume Hypothesis \ref{hyp:01}. The closure $\cL$ of $\cL_0$ is given by
\be
\cL=
\begin{pmatrix}
I & -CA^{-1}\\
0 & I
\end{pmatrix}
\begin{pmatrix}
0 & \I  \overline{T(0)}\\
-\I A &  0 
\end{pmatrix}
\begin{pmatrix}
I & \overline{A^{-1}C^\ast}\\
0 & I
\end{pmatrix}
\ee
and $\dom(\cL)=\dom(\cA)$. Moreover,
\be
\sigma(\cL)=\sigma(\overline{T}),\qquad \sigma_i(\cL)=\sigma_i(\overline{T}),\ \ i\in\{{\rm p,c,ess}\},
\ee 
where 
\be
\sigma(\overline{T})=\{z\in \C:\, 0\in \sigma(\overline{T(z)})\},\qquad \sigma_i(\overline{T(z)})=\{z\in \C:\, 0\in \sigma_i(\overline{T})\}. 
\ee
\end{theorem}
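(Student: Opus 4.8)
The plan is to mirror the proofs of Theorem~\ref{th:shkal} and Corollary~\ref{cor:a_clos}, using the factorization \eqref{eq:schur_L} in place of \eqref{eq:schur}; throughout I write $M(z)$ for its middle, antidiagonal factor, the block operator with lower-left entry $-\I A$ and upper-right entry $\I T(z)$. Setting $z=0$, I would first note that the two outer triangular factors behave exactly as in the proof of Theorem~\ref{th:shkal}: by assumption~(ii) the operator $CA^{-1}=F(0)$ is bounded and $A^{-1}C^\ast=G(0)$ has bounded closure, so the left factor is bounded and boundedly invertible, and the closure of the right factor is as well. The factor $M(0)$ has its two blocks $A$ and $\I T(0)$ acting on independent components over the product domain $\dom(A)\times\dom(S_0)$, so, since $A$ is closed by~(i) and $T(0)=S(0)=S_0$ is essentially self-adjoint by~(iv), its closure is obtained simply by replacing these blocks by $A$ and $\I\overline{T(0)}$. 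The elementary fact that conjugation by bounded boundedly invertible operators preserves closability and commutes with taking closures then yields the asserted factored form of $\cL=\overline{\cL_0}$; the identity $\dom(\cL)=\dom(\cA)$ is immediate from $\cL=\cJ\cA$ with $\cJ$ bounded and boundedly invertible, and can be cross-checked against \eqref{eq:doma}.

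For the spectrum I would run the same factorization for every $z\in\C$. The outer factors stay bounded and boundedly invertible, because the extra terms $\I zA^{-1}$ are bounded; here the hypothesis $0\in\rho(A)$ is used. Thus $\cL-z$ equals $\overline{M(z)}$ conjugated by bounded boundedly invertible operators on either side, and since such a conjugation preserves the resolvent set together with the point, continuous and essential spectra separately, one obtains $z\in\sigma_i(\cL)$ if and only if $0\in\sigma_i(\overline{M(z)})$ for each $i\in\{{\rm p,c,ess}\}$, and likewise for the full spectrum.

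It then remains to read off the spectrum of $\overline{M(z)}$ at the point $0$. Because $0\in\rho(A)$, its lower-left block is boundedly invertible, so every relevant spectral property of $\overline{M(z)}$ is controlled entirely by its upper-right block $\overline{T(z)}$: a short computation shows that $\overline{M(z)}$ is boundedly invertible, injective, has dense range, or is Fredholm precisely when $\overline{T(z)}$ has the corresponding property, with $\ker\overline{M(z)}\cong\ker\overline{T(z)}$. Unwinding the definitions of $\sigma(\overline{T})$ and $\sigma_i(\overline{T})$ then gives the stated identities.

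The difficulty is purely technical and lies in two places. First, the identity expressing $\overline{\cL_0}$ as the conjugate of $\overline{M}$ needs care, since the right factor in \eqref{eq:schur_L} is only densely defined---it is its closure that is bounded---so one must verify that $\dom(A)\times\dom(S_0)$ is a core and track domains through the product, rather than appeal to a naive ``bounded times closed is closed'' statement. Second, the essential-spectrum bookkeeping for $\overline{M(z)}$, namely the Fredholm equivalence with $\overline{T(z)}$ while the spectral parameter enters nonlinearly through $T(z)=B-(C+\I z)A^{-1}(C^\ast-\I z)$, is the most delicate point; this is where I would lean on the cited material in \cite[Chapter~II.3 and Theorem~2.4.16]{tre}.
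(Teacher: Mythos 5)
Your proposal is correct and follows exactly the route the paper intends: the paper gives no written proof of this theorem but derives it from the Frobenius--Schur factorization \eqref{eq:schur_L} together with the reference to \cite[Chapter II.3 and Theorem 2.4.16]{tre}, which is precisely the argument you reconstruct (conjugation by the bounded, boundedly invertible outer factors, closure of the antidiagonal middle factor, and reduction of the spectral data of $\overline{M(z)}$ at $0$ to that of $\overline{T(z)}$ using $0\in\rho(A)$). The two technical points you flag---tracking domains through the densely defined right factor and the Fredholm bookkeeping---are exactly the ones delegated to \cite{tre}, and are handled for $\cA$ in the same way in Corollary \ref{cor:a_clos}.
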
 

The next result is important for our further considerations.
\begin{corollary}\label{cor:deftzbl}
Assume Hypothesis \ref{hyp:01}. Then the operator $\cL$ is definitizable if and only if there is $z\in \C$ such that 
$0\in \rho(\overline{T(z)})$. 
\end{corollary}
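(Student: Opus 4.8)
The plan is to reduce the equivalence to two facts: first, that $\rho(\cL)\neq\emptyset$ is equivalent to the existence of a $z\in\C$ with $0\in\rho(\overline{T(z)})$, and second, that for the operators considered here definitizability of $\cL$ is equivalent merely to $\rho(\cL)\neq\emptyset$. The first fact is essentially a restatement of the spectral description already obtained, while the second is where the Krein-space structure genuinely enters.

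First I would record the key identity. Since $\cL=\cJ\cA$ with $\cJ=\cJ^{-1}$ and $\cA=\cA^\ast$, the form induced by $\cL$ in the Krein space $(\gH,[\cdot,\cdot])$, $[x,y]:=\langle\cJ x,y\rangle$, satisfies $[\cL f,f]=\langle\cJ\cL f,f\rangle=\langle\cA f,f\rangle$ for every $f\in\dom(\cL)=\dom(\cA)$, because $\cJ\cL=\cA$. By Corollary \ref{cor:A_spec} we have $\kappa_-(\cA)=\kappa_-(A)+\kappa_-(\overline{S_0})<\infty$, so the form $f\mapsto\langle\cA f,f\rangle$ is negative on a subspace of dimension at most $\kappa_-(\cA)$ and nonnegative on $\ran\chi_{[0,\infty)}(\cA)$, a subspace of finite codimension. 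In other words $[\cL\cdot,\cdot]$ has finitely many negative squares, and $\cL$ is a $\cJ$-self-adjoint operator enjoying this property.

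With this in hand I would argue both implications. If $\cL$ is definitizable then, by the very definition of definitizability, $\rho(\cL)\neq\emptyset$; by Theorem \ref{th:schur_L} we have $\sigma(\cL)=\{z\in\C:0\in\sigma(\overline{T(z)})\}$, hence $z\in\rho(\cL)$ exactly when $0\in\rho(\overline{T(z)})$, and the required $z$ exists. Conversely, if $0\in\rho(\overline{T(z)})$ for some $z$, the same identification gives $z\in\rho(\cL)$, so $\rho(\cL)\neq\emptyset$; combining this with the finite number of negative squares established above and invoking the Krein--Langer theory of definitizable operators (\cite{lan84}), I would conclude that $\cL$ is definitizable. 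The main obstacle is precisely this last invocation: one must guarantee that a $\cJ$-self-adjoint operator whose form has finitely many negative squares and whose resolvent set is nonempty is genuinely definitizable (and, if a concrete definitizing polynomial is wanted, that its degree is controlled by $\kappa_-(\cA)$). Everything else is bookkeeping with the factorization \eqref{eq:schur_L} and the spectral correspondence of Theorem \ref{th:schur_L}.
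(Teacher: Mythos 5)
Your proposal is correct and follows essentially the same route as the paper: both reduce the statement to the fact (from \cite{lan84}) that a $\cJ$-self-adjoint operator whose form $\langle\cJ\cL\,\cdot\,,\cdot\,\rangle=\langle\cA\,\cdot\,,\cdot\,\rangle$ has finitely many negative squares (guaranteed by \eqref{eq:kappa}) is definitizable if and only if $\rho(\cL)\neq\emptyset$, and then identify $\rho(\cL)$ with $\{z:0\in\rho(\overline{T(z)})\}$ via Theorem \ref{th:schur_L}. The ``obstacle'' you flag is exactly the result the paper cites as \cite[p.~11, Example (c)]{lan84} together with Corollary II.2.1 there, so nothing is missing.
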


\begin{proof}
By \eqref{eq:kappa}, the form $\gt[f]:=\langle \cJ\cL f,f\rangle = \langle \cA f,f\rangle $, $f\in \dom(\cL)$, has finitely many negative squares. Therefore, by \cite[p.11, Example (c)]{lan84} (see also Corollary II.2.1 in \cite{lan84}), the operator $\cL$ is definitizable  if and only if $\rho(\cL)\neq \emptyset$. It remains to apply Theorem \ref{th:schur_L}.
\end{proof}

\begin{corollary}\label{cor:SpecL}
Assume Hypothesis \ref{hyp:01}. Then $\sigma(L)$ is symmetric with respect to the real line.

If additionally $\sigma(\cL)\neq \C$, then the non-real spectrum $\sigma(\cL)\setminus\R$ of $\cL$ consists of a finite number of pairs $\lambda$, $\lambda^*$. Moreover, total algebraic multiplicity of non-real eigenvalues is  at most $2\kappa_-(\cA)$. In particular, $\sigma(\cL)\subseteq \R$ if $\kappa_-(\cA)=0$, i.e., the operator $\cA$ is positive. 
\end{corollary}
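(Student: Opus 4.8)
The plan is to regard $\cL=\cJ\cA$ as a self-adjoint operator in the Krein space $(\gH,[\cdot,\cdot])$ equipped with the indefinite inner product $[f,g]:=\spr{\cJ f}{g}$, for which the associated sesquilinear form $\gt[f,g]:=[\cL f,g]=\spr{\cA f}{g}$ is exactly the form whose diagonal $\gt[f]=\spr{\cA f}{f}$ has, by \eqref{eq:kappa}, precisely $\kappa_-(\cA)$ negative squares. For the symmetry of the spectrum I would simply note that $\cA=\cA^\ast$ and $\cJ=\cJ^\ast=\cJ^{-1}$ give $\cL^\ast=\cA\cJ=\cJ\cL\cJ$, so $\cL$ and $\cL^\ast$ are similar through the bounded invertible operator $\cJ$. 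Hence $\sigma(\cL)=\sigma(\cL^\ast)=\sigma(\cL)^\ast$, which is the asserted symmetry with respect to $\R$.

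Assume now $\sigma(\cL)\neq\C$, so that $\rho(\cL)\neq\emptyset$ and $\cL$ is definitizable by Corollary \ref{cor:deftzbl}. From the spectral theory of definitizable operators \cite{lan84} the non-real spectrum is finite and consists of normal eigenvalues (isolated, of finite algebraic multiplicity), occurring in pairs $\lambda,\lambda^\ast$ by the symmetry above; for each such $\lambda$ the finite-dimensional root subspace $\cL_\lambda$ is $[\cdot,\cdot]$-neutral, the subspace $\cL_\lambda\oplus\cL_{\lambda^\ast}$ is non-degenerate, and root subspaces attached to different pairs are mutually $[\cdot,\cdot]$-orthogonal.

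The crux is then to count the signature of $\gt$ on $\cL_\lambda\oplus\cL_{\lambda^\ast}$. Because $\cL_\lambda$ is $\cL$-invariant and $[\cdot,\cdot]$-neutral, $\gt[f,f']=[\cL f,f']=0$ for all $f,f'\in\cL_\lambda$, so $\cL_\lambda$ is totally $\gt$-isotropic of dimension $d_\lambda:=\dim\cL_\lambda$; and $\gt$ is non-degenerate on $\cL_\lambda\oplus\cL_{\lambda^\ast}$, since $\gt[h,\cdot]=[\cL h,\cdot]\equiv 0$ forces $\cL h=0$ by non-degeneracy of $[\cdot,\cdot]$, whence $h=0$ as $0\notin\{\lambda,\lambda^\ast\}$. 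A non-degenerate Hermitian form on the $2d_\lambda$-dimensional space $\cL_\lambda\oplus\cL_{\lambda^\ast}$ that carries a $d_\lambda$-dimensional isotropic subspace must have signature $(d_\lambda,d_\lambda)$, so its negative index is exactly $d_\lambda$.

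Summing over the finitely many pairs and using the mutual orthogonality, the restriction of $\gt$ to the span $\mathcal{M}$ of all non-real root subspaces has negative index $\sum_{\im\lambda>0}d_\lambda$. Since the negative index of $\gt$ on any subspace cannot exceed $\kappa_-(\cA)$, this yields $\sum_{\im\lambda>0}d_\lambda\le\kappa_-(\cA)$, and therefore the total algebraic multiplicity of the non-real eigenvalues, equal to $2\sum_{\im\lambda>0}d_\lambda$, is at most $2\kappa_-(\cA)$; for $\kappa_-(\cA)=0$ the sum is empty and $\sigma(\cL)\subseteq\R$. The main obstacle I anticipate is not the linear-algebraic signature count but the justification of the finite-dimensional reduction---that each non-real point is a normal eigenvalue with finite-dimensional, neutral, mutually orthogonal root subspace and that $\gH$ splits off $\cL_\lambda\oplus\cL_{\lambda^\ast}$ non-degenerately---for which I would rely on the structure theory of definitizable operators in \cite{lan84}.
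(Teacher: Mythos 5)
Your argument is correct and follows essentially the same route as the paper, which simply cites \cite[Proposition II.2.1]{lan84} (and \cite[Proposition 1.6]{cn}); what you have written is a sound unpacking of that cited structure result, using the same key inputs the paper uses, namely $\cL^\ast=\cJ\cL\cJ$ for the symmetry of $\sigma(\cL)$, definitizability from $\rho(\cL)\neq\emptyset$ together with the finiteness of the negative index of $\langle\cA\,\cdot\,,\cdot\,\rangle$ from \eqref{eq:kappa}, and the neutrality/orthogonality of the root subspaces at non-real eigenvalues leading to the signature count $2\sum_{\im\lambda>0}d_\lambda\le 2\kappa_-(\cA)$.
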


\begin{proof}
The proof follows from \cite[Proposition II.2.1]{lan84} (see also \cite[Proposition 1.6]{cn}).
\end{proof}

Note that in practice the condition $\sigma(\overline{T})=\{z\in\C:\, 0\in\sigma(\overline{T(z)})\}\neq \C$ is difficult to check. Let us present two examples.

\begin{example}\label{ex:3.01}
Let $A$ be an unbounded self-adjoint uniformly positive operator in $\cH$, i.e., $A=A^*>0$ and $0\in\rho(A)$. Let also $C=0$ and  $B=A^{-1}$, that is,
\be
\cA=\begin{pmatrix}
A & 0\\ 0 & A^{-1}
\end{pmatrix},\qquad 
\cL=\begin{pmatrix}
0 & \I A^{-1}\\ -\I A & 0
\end{pmatrix}.
\ee
Clearly, all the assumptions of Hypothesis \ref{hyp:01} are satisfied. By Theorem \ref{th:schur_L}, 
\be
\sigma(\cL)=\sigma(\overline{T}),\qquad T(z) = A^{-1} - z^2 A^{-1} = (1-z^2)A^{-1}.
\ee
However, $T(\pm1)=0$ and $T(z)^{-1} = (1-z^2)^{-1} A$ for all $z\neq \pm 1$. Since $A$ is unbounded, $\sigma(\cL)=\C$ and hence the operator $\cL$ is not definitizable, however, it is $\cJ$-self-adjoint and $\cJ$-positive.

In particular, a very simple example of a $\cJ$-self-adjoint operator $\cL$ with $\sigma(\cL)=\C$ is given by
\be
\cL=\bigoplus_{n\in\N} \begin{pmatrix} 0 & \I/n \\ -\I n & 0 \end{pmatrix},\qquad \gH= l^2(\N;\C^2). 
\ee
\end{example}

\begin{example}\label{ex:3.02}
Let $a$, $b$ and $c:\R\to \C$ be locally integrable functions. Assume also that $a=a^*>0$, $b=b^*\ge 0$ a.e. on $\R$ and $1/a$, $c/a\in L^\infty(\R)$. Denote by $M_a$, $M_b$ and $M_c$ the multiplication operators in $L^2(\R)$ by $a$, $b$ and $c$, respectively, and set $A=M_a$, $B=M_b$ and $C=M_c$. Hence $\cA_0$ and $\cL_0$ are the operators on $L^2(\R)\times L^2(\R)$ defined by 
\be
\cA_0 = \begin{pmatrix} M_a & M_{c^*} \\ M_c & M_b \end{pmatrix}, \qquad  
\cL_0 = \begin{pmatrix} \I M_c & \I M_{b} \\ -\I M_a & -\I M_{c^*} \end{pmatrix}.
\ee 

Clearly, the operator $\cA=\overline{\cA_0}$ is self-adjoint and hence $\cL=\overline{\cL_0}=\cJ\cA$ is $\cJ$-self-adjoint. Let us also assume  that 
\be\label{eq:A_pos}
 a(x)b(x) - |c(x)|^2 \ge 0\quad\text{for a.a.  }\ \ x\in\R. 
\ee
The latter means that the operator $\cA$ is positive and $\cL$ is $\cJ$-positive. 

It is easy to see that under the assumptions on the coefficients $a$, $b$ and $c$, Hypothesis \ref{hyp:01} is satisfied. By Theorem \ref{th:schur_L}, the resolvent set of $\cL$ is given by
\be
z\in \rho(\cL)\quad  \Longleftrightarrow\quad \frac{a}{ab - (c + \I z)(c^* - \I z)} \in L^\infty(\R).
\ee
 
Moreover, in view of the positivity assumption \eqref{eq:A_pos}, the operator $\cL$ is definitizable and $\sigma(\cL)\subseteq \R$ if and only if $\I \in \rho(\cL)$, that is, 
\be\label{eq:3.11}
\frac{a}{ab - (c-1)(c^*+1)} \in L^\infty(\R).
\ee
\end{example}

Our main interest is the similarity of the operator $\cL$ to a self-adjoint operator. 

\begin{lemma}\label{lem:lrg}
Assume Hypothesis \ref{hyp:01}. Let also $\sigma(\cL)\subseteq \R$. If the operator $\cL$ is similar to a self-adjoint operator, then there is a positive constant $K>0$ such that 
\be\label{eq:lrg_cL}
\|(\overline{T(z)})^{-1}\|_{\cH} \le \frac{K}{|\im z|},\quad \|A^{-1}(\overline{T(z)})^{-1}\|_{\cH} \le \frac{K}{|z||\im z|}
\ee
for all $z\in \C\setminus\R$.
\end{lemma}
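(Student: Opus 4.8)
The plan is to derive both estimates from a single scalar consequence of similarity, namely the linear growth of the resolvent, combined with the Frobenius--Schur factorization \eqref{eq:schur_L}. Suppose $\cL=WHW^{-1}$ with $H=H^\ast$ and $W$ bounded and boundedly invertible on $\gH$. Then $(\cL-z)^{-1}=W(H-z)^{-1}W^{-1}$ for every $z\in\C\setminus\R$, and the spectral theorem gives $\|(H-z)^{-1}\|\le|\im z|^{-1}$; hence, with $K_0:=\|W\|\,\|W^{-1}\|$,
\be\label{eq:plan_lrg}
\|(\cL-z)^{-1}\|_{\gH}\le\frac{K_0}{|\im z|},\qquad z\in\C\setminus\R .
\ee
Here $\sigma(\cL)\subseteq\R$ (which is in any case forced by the similarity) ensures, via Theorem \ref{th:schur_L}, that $0\in\rho(\overline{T(z)})$ for every non-real $z$, so that each $\overline{T(z)}^{-1}$ is a bounded operator defined on all of $\cH$.

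Next I would invert \eqref{eq:schur_L}. Arguing as in the proof of Theorem \ref{th:schur_L}, the factorization extends to $\cL-z$, with $T(z)$ replaced by its closure $\overline{T(z)}$ and the two outer triangular factors replaced by bounded, boundedly invertible operators: by Hypothesis \ref{hyp:01}(ii) (cf. the proof of Theorem \ref{th:shkal}) the blocks $(C+\I z)A^{-1}=F(0)+\I zA^{-1}$ and the closure of $A^{-1}(C^\ast-\I z)$, equal to $\overline{A^{-1}C^\ast}-\I zA^{-1}$, are bounded. The triangular factors are inverted by flipping the sign of their off-diagonal block, and the middle factor by
\be\label{eq:plan_mid}
\begin{pmatrix} 0 & \I\,\overline{T(z)}\\ -\I A & 0\end{pmatrix}^{-1}
=\begin{pmatrix} 0 & \I A^{-1}\\ -\I\,\overline{T(z)}^{-1} & 0\end{pmatrix}.
\ee
Multiplying the three inverses in the correct order produces an explicit bounded block representation of $(\cL-z)^{-1}$. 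The two entries I need are the $(2,1)$ entry, which equals $-\I\,\overline{T(z)}^{-1}$, and the $(1,1)$ entry, which equals $\I\,A^{-1}(C^\ast-\I z)\overline{T(z)}^{-1}$.

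Finally I would read off the two bounds. Since every block of an operator matrix on $\gH=\cH\times\cH$ has norm at most that of the whole matrix, the $(2,1)$ entry yields $\|\overline{T(z)}^{-1}\|_{\cH}\le\|(\cL-z)^{-1}\|_{\gH}\le K_0/|\im z|$, which is the first inequality in \eqref{eq:lrg_cL}. For the second, the $(1,1)$ entry gives $\|A^{-1}(C^\ast-\I z)\overline{T(z)}^{-1}\|_{\cH}\le K_0/|\im z|$; splitting $A^{-1}(C^\ast-\I z)\overline{T(z)}^{-1}=\overline{A^{-1}C^\ast}\,\overline{T(z)}^{-1}-\I z\,A^{-1}\overline{T(z)}^{-1}$ and using the triangle inequality together with the boundedness of $\overline{A^{-1}C^\ast}$ and the bound just obtained, I get
\be\label{eq:plan_fin}
|z|\,\|A^{-1}\overline{T(z)}^{-1}\|_{\cH}\le\bigl(1+\|\overline{A^{-1}C^\ast}\|\bigr)\frac{K_0}{|\im z|},
\ee
so that dividing by $|z|$ and enlarging the constant to $K:=(1+\|\overline{A^{-1}C^\ast}\|)K_0\ (\ge K_0)$ gives both inequalities of \eqref{eq:lrg_cL} simultaneously.

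The only delicate point, and the main obstacle, is the bookkeeping of closures: one must check that the block inverse computed formally above genuinely represents the bounded operator $(\cL-z)^{-1}$ on all of $\gH$, rather than merely a right inverse on the core $\dom(A)\times\dom(S_0)$. This is exactly the kind of verification already carried out for Theorem \ref{th:schur_L}, so I would invoke that argument rather than repeat it.
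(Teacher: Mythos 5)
Your proposal is correct and follows essentially the same route as the paper: invert the Frobenius--Schur factorization \eqref{eq:schur_L} to obtain the explicit block representation of $(\cL-z)^{-1}$, extract $(\overline{T(z)})^{-1}$ from the $(2,1)$ entry and $\overline{A^{-1}(C^\ast-\I z)}(\overline{T(z)})^{-1}$ from the $(1,1)$ entry, and apply the linear resolvent growth bound implied by similarity to a self-adjoint operator. The only difference is one of detail: the paper stops at the resolvent formula and "applies the LRG condition," whereas you spell out the splitting $\overline{A^{-1}(C^\ast-\I z)}=\overline{A^{-1}C^\ast}-\I zA^{-1}$ that yields the second inequality of \eqref{eq:lrg_cL}, which is exactly the intended argument.
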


\begin{proof}
Using the Frobenius--Schur factorization \eqref{eq:schur_L}, after straightforward calculations we find that the resolvent of $\cL$ is given by
\be
(\cL-z)^{-1} = \begin{pmatrix} 
\I \overline{A^{-1}(C^\ast - \I z)}(\overline{T(z)})^{-1} & -\I(\overline{A^{-1}(C^\ast-\I z)}(\overline{T(z)})^{-1} (C + \I z)A^{-1} + A^{-1})\\
-\I (\overline{T(z)})^{-1} & - \I (\overline{T(z)})^{-1}  (C + \I z)A^{-1}
\end{pmatrix}.
\ee
Note that, by Theorem \ref{th:schur_L}, $(\overline{T(z)})^{-1}$ is a bounded operator for each $z\in \C\setminus\R$ since $\sigma(\cL)\subseteq \R$. It remains to apply the resolvent growth condition (LRG), which states that 
\be\label{eq:lrg}
\|(\cL-z)^{-1}\|\le \frac{K}{|\im z|},\qquad z\in \C\setminus\R,
\ee
if $\cL$ is similar to a self-adjoint operator.
\end{proof}

 Lemma \ref{lem:lrg} enables us to construct a very simple example of a $J$-positive operator with the singular critical point infinity. 
 
 \begin{example}\label{ex:03}
 Let $A$ be a uniformly positive unbounded self-adjoint operator in $\cH$, $A=A^*\ge \varepsilon^2 I>0$. Let also $B=I$ and $C=0$, that is, the operator $\cL$ is given by 
\be\label{eq:4.01}
\cL=\begin{pmatrix}
0 & \I I\\
-\I A & 0
\end{pmatrix},\qquad \dom(\cL)=\dom(A)\times\cH.
\ee 
Note that $\cL$ is $\cJ$-self-adjoint and $\cJ$-positive in $\gH=\cH\times\cH$. Moreover, 
\be
T(z) = I - z^2 A^{-1} = (A-z^2)A^{-1}, \qquad z\in\C.
\ee
Therefore, 
\be
\sigma(\cL)=\{\lambda\in\R:\ \lambda^2\in\sigma(A)\}\subseteq \R\setminus (-\varepsilon,\varepsilon).
\ee
Notice that $\infty$ is a critical point of $\cL$ since $A$ is unbounded. Moreover, we immediately find that
\[
\|T(z)^{-1}\| = \|A(A-z^2)^{-1} \| \ge 1
\]
for all $z\in \rho(\cL)$. By Lemma \ref{lem:lrg}, the operator $\cL$ is not similar to a self-adjoint operator (since it does not satisfy the LRG condition). Moreover, $\infty$ is a singular critical point of $\cL$. 
\end{example}

\begin{remark}
The results of Example \ref{ex:03} can be deduced from \cite{j1}, where the norms of spectral projections are computed in terms of coefficients of $\cL$ (see \cite[Satz 2.1.3]{j1}). 
\end{remark}

\begin{remark}
Note that the operator \eqref{eq:4.01} provides a very simple example of a $\cJ$-positive operator with the singular critical point $\infty$. For instance, it suffices to take $\cH=L^2(\R_+;d\mu)$, where $d\mu$ is a positive Borel measure on $\R_+=(0,+\infty)$. Let also $A$ be the usual multiplication operator in $L^2(\R_+,d\mu)$
\[
(Af)(x)=(x+1)f(x),\quad x\in \R_+.
\]
If $\mu$ is a discrete measure, say $\mu=\sum_{n\in\N}\delta(x-n)$, then $L^2(\R_+)$ is equivalent to $l^2(\N)$ and the operator $A$ is simply the orthogonal sum of $2\times 2$ matrices
\[
\cL=\bigoplus_{n\in\N} \begin{pmatrix}
0 & \I\\
-\I n & 0
\end{pmatrix},\qquad \gH=l^2(\N;\C^2).
\]

Other simple examples of operators with singular critical points can be found in \cite[pp. 92--93]{cn}, \cite[Example 2.11]{gkmv}, 
\end{remark}

\begin{example}\label{ex:02con}
Let us continue with Example \ref{ex:3.02}. Assume additionally that the coefficients $a$, $b$ and $c$ satisfy \eqref{eq:3.11}. Then the operator $\cL$ is $\cJ$-positive and $\sigma(\cL)\subseteq \R$. Clearly, the resolvent of $\cL$ is given by 
\be
(\cL - z)^{-1} = \begin{pmatrix}
{\frac{-\I c^* - z}{(c+\I z)(c^* - \I z) -ab}} & {\frac{-\I b }{(c+\I z)(c^* - \I z) -ab}}\\
{\frac{\I a}{(c+\I z)(c^* - \I z) -ab}} & {\frac{\I c - z }{(c+\I z)(c^* - \I z) -ab}}
\end{pmatrix}.
\ee

If the operator $\cL$ is similar to a self-adjoint operator, then it satisfies the LRG condition \eqref{eq:lrg}. Clearly, the latter is equivalent to the following inequality 
\be\label{eq:3.20}
\Big\|\frac{|a| + |b| + |z|}{ab - (c-\I z)(c^*+\I z)}\Big\|_{L^\infty}\le \frac{K}{|\im z|}, \qquad z\in \C\setminus\R.
\ee
Here $K>0$ is a positive constant independent on $z$. 

For a detailed discussion of spectral properties of these operators we refer to \cite{j1} and \cite{j2}.
\end{example}

%

 \section{Block matrices with differential operators}\label{sec:4}
 
 Let $V:\R\to \R$ be a locally integrable function, $V\in L^1_{\loc}(\R)$. The following operator arises in the study of stability of solitons for the 1-D relativistic Ginzburg--Landau equation (see \cite{kk11}, \cite{kk11b}, \cite{kk13}):
 \be
 \cL_0=\begin{pmatrix}
 \I\nu \frac{d}{dx} & \I I\\
 -\I(-\frac{d^2}{dx^2}+m^2+V(x)) & \I\nu\frac{d}{dx}
 \end{pmatrix},\quad \dom(\cL_0)=\cD(H_V)\times W^{1,2}(\R).
 \ee
Here $\cD(H_V)$ is the maximal domain of the operator $H_V=-\frac{d^2}{dx^2}+m^2+V(x)$
\be
\cD(H_V)=\{f\in L^2(\R):\, f,f'\in AC_{\loc}(\R),\, -f''+Vf\in L^2(\R) \}.
\ee
We shall assume (cf. \cite{kk11}, \cite{kk11b}, \cite{kk13}) that $\nu\in (-1,1)$, $m>0$ and 
\be\label{eq:Vcomp}
 \lim_{x\to \infty}\int_x^{x+1} |V(t)|dt =0. 
\ee
Note that  condition \eqref{eq:Vcomp} implies that the potential $V$ is a relatively compact perturbation (in the sense of forms) of $H_0=-\frac{d^2}{dx^2}+m^2$ (cf. \cite[Chapter III.43]{gla}) and hence 
\[
\sigma_{\rm c}(H_V)=\sigma_{\rm ess}(H_V)=[m^2,+\infty),\quad \kappa_-(H_V)=N<\infty.
\]
Assume for simplicity that $z=0$ is not an eigenvalue of $H_V$. 
Then all conditions (i)--(iv) of Hypothesis \ref{hyp:01} are satisfied and hence we can apply the results from the previous sections. 

\begin{remark}\label{rem:nu=0}
If $\nu=0$, then the operator $\cL$ is a particular case of the operator considered in Example \ref{ex:03}. In this case the operator $\cL$ does satisfy the LRG condition \eqref{eq:lrg} and hence is not similar to a self-adjoint operator. We exclude this case from our further considerations. 
\end{remark}

Let $\psi_+(z,x)$ and $\psi_-(z,x)$ be the Weyl solutions of $-y''+(m^2+V(x))y=zy$ normalized such that $W(\psi_+,\psi_-)(z)=\psi_+(z,x)\psi_-'(z,x)-\psi_+'(z,x)\psi_-(z,x)=1$. Then the resolvent of the 1-D Schr\"odinger operator is given by
\be\label{eq:res_H}
(H_V-z)^{-1}f=\int_{\R}G(z;x,y)f(y)dy,\quad 
G(z;x,y)=\begin{cases} \psi_+(x) \psi_-(y), & y\leq x, \\ \psi_+(y) \psi_-(x), & y> x,    \end{cases}.
\ee

Denote $D=\frac{d}{dx}$, $\dom(D)=W^{1,2}(\R)$,  and assume that $0\in\rho(H_V)$. Then using \eqref{eq:res_H}, integration by parts shows that 
\[
DH_V^{-1}f=\int_\R G_x(0;x,y)f(y)dy,\quad \overline{H_V^{-1}D}f=-\int_\R G_y(0;x,y)f(y)dy,\ \ f\in L^2(\R),
\]
and
\[
\overline{S(0)}f=(1+\nu^2)f-\nu^2\int_{\R}G_{xy}(0;x,y)f(y)dy,\quad f\in L^2(\R).
\] 
Here the subscript denotes the partial derivative. Since $\overline{S(0)}$ is a bounded operator, the ranges of $DH_V^{-1}$ and $\overline{H_V^{-1}D}$ are contained in $W^{1,2}(\R)$.

Firstly, let us describe the spectral properties of the operator $\cA_0=\cJ\cL_0$ and its closure $\cA$.

\begin{lemma}\label{lem:4.03a}
Let $m>0$, $V$ satisfy \eqref{eq:Vcomp} and $0\in\rho(H_V)$. Then the operator $\cA_0$ is essentially self-adjoint and its closure is given by 
\be
\cA=
\begin{pmatrix}
I & 0\\
 \nu DH_V^{-1} & I
\end{pmatrix}
\begin{pmatrix}
H_V &  0 \\
0 & \overline{S(0)}
\end{pmatrix}
\begin{pmatrix}
I & -\nu \overline{H_V^{-1}D}\\
0 & I
\end{pmatrix}
\ee
on the domain 
\be
\dom(\cA)=\{f=(f_1,f_2)^T:\, f_1-\nu\overline{H_V^{-1}D}f_2\in \cD(H_V),\ f_2\in L^2(\R)\}.
\ee 

The form domain of the operator $\cA$ is given by
\be\label{eq:5.07}
\dom(\cA^{1/2})=\{f=(f_1,f_2)^T:\, f_1\in W^{1,2}(\R),\ f_2\in L^2(\R)\}.
\ee
\end{lemma}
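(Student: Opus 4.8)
\textbf{Proof proposal for Lemma \ref{lem:4.03a}.}

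The plan is to verify that the coefficients $A=H_V$, $B=I$, $C=\nu D$ satisfy Hypothesis \ref{hyp:01}, and then simply invoke the general machinery of Theorem \ref{th:shkal} and Corollary \ref{cor:a_clos}. For the essential self-adjointness and the factorization, I would proceed as follows. First I would check the hypotheses point by point: (i) holds because $H_V=H_V^*$ with $0\in\rho(H_V)$ by assumption, and $\kappa_-(H_V)=N<\infty$ from the relative compactness of $V$ noted above \eqref{eq:Vcomp}; (iii) is immediate since $B=I=I^*$. For (ii), the key observation is that $\dom(H_V)=\cD(H_V)\subset W^{1,2}(\R)=\dom(D)=\dom(D^*)$ (here $C=\nu D$ and $C^*=-\nu D$ up to the sign on $W^{1,2}$), which follows from the standard Sobolev embedding for Schr\"odinger operators. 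For (iv) I would identify $S_0=B-CA^{-1}C^*=I-\nu^2 D H_V^{-1}D^*$ on $\dom(S_0)=W^{1,2}(\R)$ and show $\overline{S_0}$ is bounded and self-adjoint with $\kappa_-(\overline{S_0})<\infty$; the explicit integral kernel $\overline{S(0)}f=(1+\nu^2)f-\nu^2\int_\R G_{xy}(0;x,y)f(y)\,dy$ already displayed in the excerpt exhibits this boundedness, and positivity for $|\nu|<1$ comes from the symbol estimate $1-\nu^2\xi^2/(\xi^2+m^2)\ge 1-\nu^2>0$ at the level of the free operator, with $V$ a relatively compact correction.

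Once Hypothesis \ref{hyp:01} is confirmed, Theorem \ref{th:shkal} gives essential self-adjointness of $\cA_0$ immediately, and Corollary \ref{cor:a_clos} gives the factorized form of the closure $\cA$ together with its domain, after substituting $F(0)=C A^{-1}=\nu D H_V^{-1}$ and $F(0)^*=\overline{A^{-1}C^*}=-\nu\overline{H_V^{-1}D}$ (the sign and the closure bar arising because $C^*=-\nu D$ and $G(0)=(A-0)^{-1}C^*$ needs closing, as in the proof of Theorem \ref{th:shkal}). This reproduces exactly the stated expressions for $\cA$ and $\dom(\cA)$.

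The remaining and genuinely new assertion is the description \eqref{eq:5.07} of the form domain $\dom(\cA^{1/2})$. Here I would argue at the level of quadratic forms rather than operators. The form associated with $\cA$ is $\gt[f]=\spr{\cA f}{f}$, which via the factorization \eqref{eq:A_clos} equals $\spr{H_V u_1}{u_1}+\spr{\overline{S(0)}f_2}{f_2}$ where $u_1=f_1+F(0)^*f_2=f_1-\nu\overline{H_V^{-1}D}f_2$. Since $\overline{S(0)}$ is bounded and uniformly positive, the second term contributes a norm equivalent to $\|f_2\|^2$ and imposes only $f_2\in L^2(\R)$. The first term has form domain $\dom(H_V^{1/2})=W^{1,2}(\R)$; the point is that the map $f_2\mapsto \nu\overline{H_V^{-1}D}f_2$ sends $L^2(\R)$ into $W^{1,2}(\R)$ (as already noted in the excerpt, the ranges of $DH_V^{-1}$ and $\overline{H_V^{-1}D}$ lie in $W^{1,2}$), so the condition $u_1\in W^{1,2}(\R)$ is equivalent to $f_1\in W^{1,2}(\R)$. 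Combining the two conditions yields \eqref{eq:5.07}.

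I expect the main obstacle to be making the form-domain computation rigorous, specifically the interchange between the operator factorization and the form factorization. The subtlety is that $F(0)^*=-\nu\overline{H_V^{-1}D}$ is only a bounded operator after closure, and one must be careful that the substitution $u_1=f_1+F(0)^*f_2$ defines a bounded bijection between $\{f:f_1\in W^{1,2}\}$ and $\{(u_1,f_2):u_1\in W^{1,2}\}$; this requires confirming that $\overline{H_V^{-1}D}$ maps $L^2$ boundedly into $W^{1,2}$, not merely into $L^2$. The estimate on the symbol $\xi/(\xi^2+m^2)$, which decays like $1/\xi$, shows $DH_V^{-1}$ gains one derivative, and transposing gives the same for $\overline{H_V^{-1}D}$; the relatively compact perturbation $V$ does not destroy this mapping property. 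Verifying this $W^{1,2}$-boundedness carefully is the crux, and everything else reduces to bookkeeping within the already-established Frobenius--Schur framework.
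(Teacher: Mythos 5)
Your proposal is correct and follows essentially the same route as the paper: the closure and its domain come from Corollary \ref{cor:a_clos} after checking Hypothesis \ref{hyp:01} (which the paper verifies just before the lemma), and the form domain \eqref{eq:5.07} is obtained from the Frobenius--Schur factorization together with the mapping property $\overline{H_V^{-1}D}\colon L^2(\R)\to W^{1,2}(\R)$, which is exactly the paper's one-line argument. The only minor imprecision is your appeal to uniform positivity of $\overline{S(0)}$ in the form-domain step; boundedness of $\overline{S(0)}$ (so that its form domain is all of $L^2(\R)$) is what is actually needed and suffices.
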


\begin{proof}
The first claim immediately follows from Corollary \ref{cor:a_clos}. To prove \eqref{eq:5.07} it suffices to note that 
\be
\dom(\cA^{1/2})=\{f=(f_1,f_2)^T:\, f_1-\nu \overline{H_V^{-1}D}f_2\in W^{1,2}(\R),\ f_2\in L^2(\R)\}.
\ee
However, $\overline{H_V^{-1}D}f_2\in W^{1,2}(\R)$ whenever $f_2\in L^2(\R)$.
 \end{proof}
 
The next result describes the essential spectrum of $\cA$ (cf. \cite[Lemma A.1]{kk13}).

\begin{corollary}
Assume the conditions of Lemma \ref{lem:4.03a}. Then 
\be\label{eq:a_ess}
\sigma_{\rm ess}(\cA)=\begin{cases}
[1-\nu^2,1]\cup[m^2,+\infty), & m\ge 1,\\
[1-\nu^2,m^2]\cup[1,+\infty), & 0\le 1-\nu^2\le m^2< 1,\\
[m^2,1-\nu^2]\cup[1,+\infty), & 0<m^2<1-\nu^2\le  1.
\end{cases}
\ee
\end{corollary}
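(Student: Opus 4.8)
The plan is to compute $\sigma_{\rm ess}(\cA)$ by reducing to the constant-coefficient case and then diagonalizing via the Fourier transform. Write $\cA_{\mathrm{free}}$ for the operator obtained from $\cA$ by setting $V\equiv0$, i.e. by replacing $H_V$ with $H_0=-\frac{d^2}{dx^2}+m^2$ (admissible since $m>0$ gives $0\in\rho(H_0)$). The difference is the diagonal block $\cA-\cA_{\mathrm{free}}=\begin{pmatrix}V&0\\0&0\end{pmatrix}$. By \eqref{eq:5.07} the form domain of $\cA$ (and of $\cA_{\mathrm{free}}$) is $W^{1,2}(\R)\times L^2(\R)$, and a short computation of the form of $\cA_{\mathrm{free}}$, namely $\|f_1'\|^2+m^2\|f_1\|^2+2\nu\re\langle f_1',f_2\rangle+\|f_2\|^2$, together with $|\nu|<1$ shows after absorbing the cross term that this form dominates $\|f_1\|_{W^{1,2}}^2+\|f_2\|^2$. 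Since \eqref{eq:Vcomp} makes $V$ relatively form-compact with respect to $H_0$, the perturbation $f\mapsto\langle Vf_1,f_1\rangle$ is relatively form-compact with respect to $\cA_{\mathrm{free}}$, whence $\sigma_{\rm ess}(\cA)=\sigma_{\rm ess}(\cA_{\mathrm{free}})$ by the stability of the essential spectrum under relatively form-compact perturbations.

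Next I would apply the Fourier transform, which unitarily maps $\cA_{\mathrm{free}}$ onto the operator of multiplication by the Hermitian symbol
\[
M(\xi)=\begin{pmatrix}\xi^2+m^2 & -\I\nu\xi\\ \I\nu\xi & 1\end{pmatrix},\qquad\xi\in\R.
\]
For a multiplication operator by a continuous Hermitian matrix function on $L^2(\R;\C^2)$ the spectrum is purely essential, so $\sigma_{\rm ess}(\cA_{\mathrm{free}})=\sigma(\cA_{\mathrm{free}})=\overline{\bigcup_{\xi\in\R}\sigma(M(\xi))}$. Solving $\det(M(\xi)-\lambda)=0$ for $\xi^2$ gives $\xi^2=g(\lambda)$ with
\[
g(\lambda)=\frac{(\lambda-1)(\lambda-m^2)}{\lambda-(1-\nu^2)},
\]
the value $\lambda=1-\nu^2$ being reached as the limit $\xi\to\infty$. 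Hence $\lambda\in\sigma_{\rm ess}(\cA)$ if and only if $g(\lambda)\ge0$.

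Finally, \eqref{eq:a_ess} follows from a sign analysis of the rational function $g$. Its zeros are $1$ and $m^2$, its pole is $1-\nu^2$, and $1-\nu^2<1$ because $\nu\neq0$; distinguishing the three possible orderings of $m^2$ relative to $1-\nu^2$ and $1$ (namely $1\le m^2$, then $1-\nu^2\le m^2<1$, then $m^2<1-\nu^2$) produces exactly the three displayed cases, with $1-\nu^2$ always appearing as a closed endpoint on the side where $g\to+\infty$ at the pole. The main obstacle is the first step: verifying that the diagonal perturbation $\mathrm{diag}(V,0)$ is relatively form-compact with respect to $\cA_{\mathrm{free}}$, which hinges on the form lower bound above (this is precisely where $|\nu|<1$ enters) and on the relative form-compactness of $V$ furnished by \eqref{eq:Vcomp}. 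Once this reduction is in place, the remaining steps are routine linear algebra and elementary calculus.
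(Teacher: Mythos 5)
Your proposal is correct and follows essentially the same route as the paper: reduce to $V\equiv 0$ via relative form-compactness of $\mathrm{diag}(V,0)$ (using the form domain \eqref{eq:5.07} and Weyl's theorem), Fourier-transform to the $2\times 2$ Hermitian symbol, and determine the spectrum from the sign of $(\lambda-1)(\lambda-m^2)/(\lambda-(1-\nu^2))$, with $1-\nu^2$ entering as the limit of the lower eigenvalue branch as $\xi\to\infty$. The only difference is cosmetic: you spell out the coercivity of the free form (absorbing the cross term via $|\nu|<1$) where the paper simply invokes \eqref{eq:5.07}, and you phrase the spectrum as the closure of the pointwise spectra rather than via the explicit resolvent of the symbol.
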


\begin{proof}
It follows from  \eqref{eq:5.07} and \eqref{eq:Vcomp} that the operator $\mathcal{V}=V\oplus 0$ is a relatively compact perturbation (in the sense of forms) of the operator $\cA$ with $V\equiv 0$. 
Therefore, by the version of Weyl's theorem for relatively compact perturbations, $\sigma_{\rm ess}$ does not depend on $V$ and hence we can set $V\equiv 0$.

To find the essential spectrum of the operator $\cA$ with $V\equiv 0$ let us apply the Fourier transform. Then the operator $\cA$ is equivalent to the multiplication operator $\hat{\cA}$ in $L^2(\R)\times L^2(\R)$ defined by $\hat{f}(\lambda)\to \hat{\cA}(\lambda)\hat{f}(\lambda)$, where
\be
\hat{\cA}(\lambda)=\begin{pmatrix}
1 & 0\\
 \frac{\I \nu\, \lambda}{\lambda^2+m^2} & 1
\end{pmatrix}
\begin{pmatrix}
\lambda^2+m^2 & 0\\
0 & 1- \frac{\nu^2\lambda^2}{\lambda^2+m^2}
\end{pmatrix}
\begin{pmatrix}
1 & \frac{ -\I \nu\,\lambda}{\lambda^2+m^2}\\
0 & 1
\end{pmatrix},\quad \lambda \in\R.
\ee 
Since the function $\hat{\cA}(\cdot)$ is continuous on $\R$, we conclude that $\sigma(\cA)=\sigma(\hat{\cA})=\sigma_{\rm ess}(\hat{\cA})$. Straightforward calculations show that 
\begin{align*}
(\hat{\cA}(\lambda) - z)^{-1}& = \frac{1}{\det(\hat{\cA}(\lambda) - z)} \begin{pmatrix}
1 & \I\nu\lambda\\
-\I\nu\lambda & \lambda^2+m^2
\end{pmatrix},\quad \lambda\in\R,\\
\det (\hat{\cA}(\lambda)-z)
&=\lambda^2(1-z-\nu^2)+(m^2-z)(1-z).
\end{align*}
Therefore, $z\in\sigma(\hat{\cA})$ if and only if either $z=1-\nu^2$ or $\det (\hat{\cA}(\lambda)-z) = 0$ for some $\lambda\in\R$. 
Clearly, this equation has real solutions if and only if 
\[
\frac{(z-m^2)(z-1)}{z-(1-\nu^2)} \ge 0.
\]
This completes the proof of \eqref{eq:a_ess}.
\end{proof}

\begin{corollary}\label{cor:4.04}
Assume that $0\notin\sigma(H_V)$. Then 
\be\label{eq:4.13}
\kappa_-(\cA)=\kappa_-(H_V)+\kappa_-(\overline{S(0)})<\infty.
\ee
In particular, $\cA$ is positive if and only if so are $H_V$ and $\overline{S(0)}$.
\end{corollary}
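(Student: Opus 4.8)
The plan is to derive Corollary \ref{cor:4.04} as a direct application of the general formula \eqref{eq:kappa} from Corollary \ref{cor:A_spec} to the present concrete situation, with $A$ replaced by $H_V$ and $S_0$ by $S(0)$. The starting point is to verify that under the standing assumptions of this section ($m>0$, $V$ satisfying \eqref{eq:Vcomp}, and $0\notin\sigma(H_V)$) all of Hypothesis \ref{hyp:01} holds, as was already noted in the text preceding Lemma \ref{lem:4.03a}. In particular, since $0\in\rho(H_V)$ the operator $H_V$ is closed with $0\in\rho(H_V)$, and the condition \eqref{eq:Vcomp} gives $\kappa_-(H_V)=N<\infty$; the operator $\overline{S(0)}$ was shown in Lemma \ref{lem:4.03a} (via the Frobenius--Schur construction) to be bounded and self-adjoint, so $\kappa_-(\overline{S(0)})$ is finite as well.

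With these hypotheses in place, the identity \eqref{eq:4.13} is then simply \eqref{eq:kappa} read in the notation of this section, namely
\be
\kappa_-(\cA)=\kappa_-(H_V)+\kappa_-(\overline{S(0)}).
\ee
Since each summand on the right is finite, so is the left-hand side, which yields the finiteness asserted in \eqref{eq:4.13}. The final sentence about positivity is the degenerate case: $\kappa_-(\cA)=0$ precisely when both $\kappa_-(H_V)=0$ and $\kappa_-(\overline{S(0)})=0$, which is exactly the statement that $\cA$ is positive if and only if $H_V$ and $\overline{S(0)}$ are both positive. This is again just the specialization of the corresponding remark in Corollary \ref{cor:A_spec}.

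Since everything reduces to invoking already-established results, the only genuine work is bookkeeping: confirming that the concrete operators $H_V$, $C=\nu D$, $B=I$ indeed meet conditions (i)--(iv) so that Corollary \ref{cor:A_spec} applies verbatim. I expect the main (and only mildly nontrivial) point to be the justification that $\overline{S(0)}$ here coincides with the closure $\overline{S_0}$ of the operator appearing in Hypothesis \ref{hyp:01}(iv), and that its number of negative squares is finite; but this was essentially handled by the explicit boundedness of $\overline{S(0)}$ established in Lemma \ref{lem:4.03a}. Given that groundwork, the proof is a one-line appeal to \eqref{eq:kappa}.
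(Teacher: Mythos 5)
Your reduction of the equality in \eqref{eq:4.13} to \eqref{eq:kappa} is exactly what the paper does, and the identification of $\overline{S(0)}$ with the closure of the operator $S_0$ from Hypothesis \ref{hyp:01}(iv) is indeed routine. The genuine gap is in your justification of $\kappa_-(\overline{S(0)})<\infty$: you argue that $\overline{S(0)}$ is bounded and self-adjoint, ``so $\kappa_-(\overline{S(0)})$ is finite as well.'' This is a non sequitur --- a bounded self-adjoint operator can perfectly well have an infinite-dimensional negative spectral subspace (take $-I$), so boundedness buys you nothing here. Note also that this finiteness is precisely condition (iv) of Hypothesis \ref{hyp:01} in the concrete setting, so it cannot be absorbed into the ``bookkeeping'' of verifying the hypotheses: it is the one substantive thing the corollary's proof has to establish.

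The paper closes this gap with a compact-perturbation argument. Writing $S_0(0)$ for the operator $S(0)$ with $V\equiv 0$, the Fourier transform gives $\sigma(\overline{S_0(0)})=\sigma_{\rm ess}(\overline{S_0(0)})=[1-\nu^2,1]$, and the difference $S_0(0)-S(0)=\nu^2 DH_0^{-1}VH_V^{-1}D$ has compact closure on $L^2(\R)$ because $V$ satisfies \eqref{eq:Vcomp}. Hence $\sigma_{\rm ess}(\overline{S(0)})=[1-\nu^2,1]\subset(0,1]$ (this is where $|\nu|<1$ enters), so the negative part of the spectrum of $\overline{S(0)}$ consists of at most finitely many eigenvalues of finite multiplicity, i.e.\ $\kappa_-(\overline{S(0)})<\infty$. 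Some version of this step --- locating the essential spectrum of $\overline{S(0)}$ in $(0,\infty)$ --- is indispensable; without it the right-hand side of \eqref{eq:4.13} could a priori be infinite and the corollary would say nothing.
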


\begin{proof}
The first equality in \eqref{eq:4.13} follows from Corollary \ref{cor:A_spec}. Moreover, due to \eqref{eq:Vcomp}, $\kappa_-(H_V)=N<\infty$. It remains to show that $\kappa_-(\overline{S(0)})<\infty$. Denote by $S_0(0)$ the operator $S(0)$ with $V\equiv 0$. Note that $\sigma(\overline{S_0(0)})=\sigma_{\rm ess}(\overline{S_0(0)})=[1-\nu^2,1]$ (immediately follows by applying the Fourier transform). Moreover, 
\[
S_0(0)-S(0)=\nu^2DH_0^{-1}VH_V^{-1}D.
\] 
Note that the closure of this operator is compact on $L^2(\R)$ since $V$ satisfies \eqref{eq:Vcomp}. Therefore, $\sigma_{\rm ess}(\overline{S(0)})=\sigma(\overline{S_0(0)})=[1-\nu^2,1]\subset (0,1]$ since $|\nu| \in (0,1)$. This  implies the desired inequality.
\end{proof}

\begin{corollary}
The operator $\cA$ is nonnegative if and only if so are the operators $H_V$ and 
\be
H_{\nu,V}:=-(1-\nu^2)\frac{d^2}{dx^2}+m^2+V(x),\quad \dom(H_{\nu,V})=\cD(H_{\nu,V}).
\ee
\end{corollary}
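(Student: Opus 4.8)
The plan is to use the preceding Corollary \ref{cor:4.04}, which already reduces the nonnegativity of $\cA$ to the simultaneous nonnegativity of $H_V$ and $\overline{S(0)}$. Thus the entire content of the claim is the identification of the nonnegativity of $\overline{S(0)}$ with that of the Schr\"odinger-type operator $H_{\nu,V}=-(1-\nu^2)\frac{d^2}{dx^2}+m^2+V$. Since $S(0)=B-CH_V^{-1}C^\ast$ with $B=I$ and $C=\nu D$, we have $S(0)=I-\nu^2 DH_V^{-1}D^\ast$ on $\dom(S_0)=W^{1,2}(\R)$, and the goal is to show that the quadratic form of $\overline{S(0)}$ coincides, after a natural unitary or form identification, with a rescaled copy of the form of $H_{\nu,V}$.

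The cleanest route I would take is through quadratic forms rather than operators. First I would write down the form $\gt_{S}[f]=\langle \overline{S(0)}f,f\rangle = \|f\|^2-\nu^2\langle H_V^{-1}D^\ast f, D^\ast f\rangle$ for $f$ in the form domain, and then express this in terms of the resolvent $H_V^{-1}$ using the explicit kernel derivatives from \eqref{eq:res_H}. The key algebraic step is to relate $\overline{S(0)}$ to $H_{\nu,V}$ by the substitution $g:=\overline{H_V^{-1}D}f$; integration by parts together with the factorization computed just before Lemma \ref{lem:4.03a} should convert the spectral problem $S(0)f=0$ (or the form inequality $\gt_S\ge 0$) into a spectral/form statement for $H_{\nu,V}$. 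Concretely, I expect an identity of the form $\langle \overline{S(0)}f,f\rangle \geq 0$ for all $f$ if and only if $-(1-\nu^2)g''+(m^2+V)g$ is a nonnegative form on its natural domain, the factor $1-\nu^2$ emerging precisely from the $I-\nu^2(\cdots)$ structure of $S(0)$ after the resolvent of $H_V$ is inverted.

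The main obstacle I anticipate is the domain and self-adjointness bookkeeping: $S(0)$ is defined as an operator on $W^{1,2}(\R)$ but the natural manipulations pass through the (unbounded) operator $H_V^{-1}D$ and its closure, so one must be careful that the form identification holds on the correct core and extends to the closure $\overline{S(0)}$. Since Corollary \ref{cor:A_spec} guarantees $\overline{S(0)}$ is self-adjoint and bounded (as noted before Lemma \ref{lem:4.03a}), nonnegativity of the operator is equivalent to nonnegativity of its form on a form core, so it suffices to verify the form identity on the dense set $W^{1,2}(\R)$, or better on $C_0^\infty(\R)$, and then invoke a density/closure argument. I would handle this by checking that $H_{\nu,V}$ is likewise essentially self-adjoint (or at least has a well-defined form), so that both sides are determined by their values on a common core, after which the form identity immediately yields the equivalence $\overline{S(0)}\ge 0 \iff H_{\nu,V}\ge 0$. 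Combining this with Corollary \ref{cor:4.04} completes the proof.
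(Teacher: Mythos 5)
Your high-level strategy coincides with the paper's: reduce via Corollary \ref{cor:4.04} to the equivalence $\overline{S(0)}\ge 0\Leftrightarrow H_{\nu,V}\ge 0$, and prove that equivalence by a quadratic-form computation on the core $C_c^\infty(\R)$, starting from $\gt_S[f]=\|f\|^2-\nu^2\langle H_V^{-1}Df,Df\rangle$. However, the one step that carries the entire content of the corollary is not actually performed: you write that the substitution ``should convert'' the form inequality and that you ``expect an identity of the form \dots'' --- which is a restatement of the claim, not a derivation. Moreover, the substitution you propose, $g=\overline{H_V^{-1}D}f$, is not the one that makes the computation close; it yields $\gt_S[f]=\langle H_V\bigl(H_0^{-1}-\nu^2H_V^{-1}\bigr)H_Vg,g\rangle$ and leaves you facing exactly the same unresolved equivalence, now with extra factors of $H_V$ to manage.

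The paper's substitution is simply $g=f'=Df$. Since $\|f\|^2=\langle H_0^{-1}g,g\rangle$ with $H_0=-\tfrac{d^2}{dx^2}$ (and $\{f':f\in C_c^\infty(\R)\}$ is dense in $L^2(\R)$), the condition $\gt_S[f]\ge 0$ for all $f$ becomes the form inequality $H_0^{-1}\ge\nu^2H_V^{-1}$. The missing idea in your write-up is the mechanism by which the resolvent of $H_V$ then gets ``inverted'': one needs the antimonotonicity of the operator inverse on positive operators ($X^{-1}\ge Y^{-1}$ iff $Y\ge X$), which converts $H_0^{-1}\ge(\nu^{-2}H_V)^{-1}$ into $H_V\ge\nu^2H_0$, i.e.\ $H_{\nu,V}=H_V-\nu^2H_0\ge 0$; this is precisely where the factor $1-\nu^2$ appears. (This step also quietly uses $H_V>0$, which is harmless since $0\in\rho(H_V)$ is a standing assumption and $H_{\nu,V}\ge0$ forces $H_V\ge0$.) Your domain and closure remarks are fine, but without identifying this antimonotonicity step the argument stops exactly where the proof begins.
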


\begin{proof}
By the previous corollary, it remains to show that $\overline{S(0)}\ge 0$ if and only if so is $H_{\nu, V}$. Next, the operator is positive if and only if 
\[
\gt_S[f]=(\overline{S(0)}f,f)_{L^2}=(f,f)_{L^2}-\nu^2(H_V^{-1}Df,Df)_{L^2}>0 
\] 
for all $f\in C^\infty_c(\R)$ (since this linear subspace is dense in $L^2(\R)$).
Setting $g(x)=f'(x)$ and integrating by parts once again, we finally get
\[
\gt_S[f]=(H_0^{-1}g,g)_{L^2}-\nu^2(H_V^{-1}g,g)_{L^2}>0,\quad g\in C^\infty_c(\R).
\] 
Here $H_0=-\frac{d^2}{dx^2}$ is the free Hamiltonian on $L^2(\R)$. The latter is equivalent to the positivity of the operator $H_{\nu,V}$.
\end{proof}

Now let us describe the spectral properties of the operator $\cL=\cJ\cA$. We begin with the description of the closure of $\cL_0$.

\begin{lemma}\label{lem:4.03}
Assume the conditions of Lemma \ref{lem:4.03a}. Then the operator $\cL_0$ is essentially $\cJ$-self-adjoint and its closure is given by 
\be
\cL=
\begin{pmatrix}
I & -\nu DH_V^{-1}\\
0 & I
\end{pmatrix}
\begin{pmatrix}
0 & \I \overline{S(0)}\\
-\I H_V &  0 
\end{pmatrix}
\begin{pmatrix}
I & \nu \overline{H_V^{-1}D}\\
0 & I
\end{pmatrix}
\ee
on the domain 
\be
\dom(\cL)=\{f=(f_1,f_2)^T:\, f_1+\nu\overline{H_V^{-1}D}f_2\in W^{2,2}(\R),\ f_2\in L^2(\R)\}.
\ee 
\end{lemma}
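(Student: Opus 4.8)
The plan is to obtain both assertions as a direct specialization of the Frobenius--Schur analysis of Section~\ref{sec:3} to the coefficients $A=H_V$, $B=I$, $C=\nu D$, where $D=\frac{d}{dx}$. Since the discussion preceding Remark~\ref{rem:nu=0} has already verified that these coefficients satisfy conditions (i)--(iv) of Hypothesis~\ref{hyp:01}, the essential $\cJ$-self-adjointness of $\cL_0$ is immediate from the general principle recorded at the beginning of Section~\ref{sec:3}: Theorem~\ref{th:shkal} makes $\cA_0=\cJ\cL_0$ essentially self-adjoint, and since $\cJ=\cJ^{-1}$ is a bounded fundamental symmetry, the identity $\cL_0=\cJ\cA_0$ forces $\cL_0$ to be essentially $\cJ$-self-adjoint with closure $\cL=\cJ\cA$. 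Nothing beyond this general fact is needed for the first claim.

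For the factorization I would invoke Theorem~\ref{th:schur_L} verbatim and merely identify its three factors in the present case, which amounts to three computations. First, $CA^{-1}=\nu DH_V^{-1}$ enters the upper-triangular left factor, and it was already shown to be bounded with range in $W^{1,2}(\R)$. Second, since $D^\ast=-D$ on $W^{1,2}(\R)$ one has $C^\ast=-\nu D$, hence $A^{-1}C^\ast=-\nu H_V^{-1}D$, and on taking closures through the explicit kernel formula $\overline{H_V^{-1}D}f=-\int_\R G_y(0;x,y)f(y)\,dy$ the right factor acquires the off-diagonal entry $\overline{A^{-1}C^\ast}=-\nu\,\overline{H_V^{-1}D}$. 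Third, the middle factor contains $\overline{T(0)}$, and since $T(0)=B-CA^{-1}C^\ast=I+\nu^2 DH_V^{-1}D=S(0)$ we have $\overline{T(0)}=\overline{S(0)}$, the bounded operator recorded before Lemma~\ref{lem:4.03a} (equal to $(1+\nu^2)I$ minus the integral operator with kernel $\nu^2 G_{xy}(0;x,y)$). Substituting these three objects into the three factors of Theorem~\ref{th:schur_L} yields the displayed form of $\cL$.

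The domain then follows from the general identity $\dom(\cL)=\dom(\cA)$ of Theorem~\ref{th:schur_L} together with the description of $\dom(\cA)$ in Lemma~\ref{lem:4.03a}: a pair $f=(f_1,f_2)^T$ lies in $\dom(\cL)$ precisely when $f_2\in L^2(\R)$ and the reduced first component $f_1+\overline{A^{-1}C^\ast}f_2=f_1-\nu\,\overline{H_V^{-1}D}f_2$ belongs to $\dom(H_V)=\cD(H_V)$, which is the asserted set. The only point that genuinely demands care---and the one I expect to be the real obstacle---is the passage to closures for the two off-diagonal entries: because $D$ is unbounded, one must confirm that $DH_V^{-1}$ and $H_V^{-1}D$ close up to bounded operators (equivalently, that their kernels $G_x$ and $G_y$ define bounded integral operators) and that $\overline{S(0)}$ is bounded, so that the product of the three factors is literally the bounded--unbounded--bounded composition analyzed in the proof of Theorem~\ref{th:schur_L}. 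All three boundedness facts are already assembled from the Green-function computations at the start of Section~\ref{sec:4}, after which both displays are read off without further work.
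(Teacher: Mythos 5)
Your argument is correct and coincides with the paper's own proof, which likewise just notes $T(0)=S(0)$ and $\dom(\cL)=\dom(\cA)$ and then cites Theorem \ref{th:schur_L} together with Lemma \ref{lem:4.03a}. One remark: your computation $\overline{A^{-1}C^\ast}=-\nu\,\overline{H_V^{-1}D}$ is the consistent one, so the sign $+\nu\,\overline{H_V^{-1}D}$ (and the appearance of $W^{2,2}(\R)$ in place of $\cD(H_V)$) in the printed statement of the lemma should be read as a typo, since it contradicts the domain formula of Lemma \ref{lem:4.03a} with which the proof identifies $\dom(\cL)$.
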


\begin{proof}
Note that $T(0)=S(0)$ and $\dom(\cL)=\dom(\cA)$. The rest of the proof follows from Theorem \ref{th:schur_L} and Lemma \ref{lem:4.03a}.
\end{proof}

As an immediate corollary of Theorem \ref{th:schur_L} we obtain the following description of $\sigma(\cL)$.

\begin{corollary}\label{cor:4.07}
Assume the conditions of Lemma \ref{lem:4.03a} and set 
\be\label{eq:T}
T(z)=I+ (\nu D-\I z)H_V^{-1}(\nu D+\I z).
\ee 
Then 
\be
z\in\sigma(\cL)\ \ \Leftrightarrow \ \ 0\in \sigma(\overline{T(z)})\qquad (z\in\sigma_i(\cL)\ \ \Leftrightarrow \ \ 0\in \sigma_i(\overline{T(z)}),\ \ i\in\{{\rm p, c, ess}\}).
\ee
\end{corollary}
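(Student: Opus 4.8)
This is an immediate specialization of Theorem \ref{th:schur_L}, so the plan is to identify the abstract operator family $T(z) = B - (C+\I z)A^{-1}(C^*-\I z)$ appearing there with the concrete family \eqref{eq:T}, after which the conclusion can be read off verbatim. Recall that Theorem \ref{th:schur_L} already furnishes $\sigma(\cL) = \sigma(\overline{T})$ together with $\sigma_i(\cL) = \sigma_i(\overline{T})$ for $i \in \{{\rm p,c,ess}\}$, where $\sigma_i(\overline{T}) = \{z \in \C : 0 \in \sigma_i(\overline{T(z)})\}$. Thus once the two families are shown to agree, there is genuinely nothing further to argue.

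First I would read the coefficients off the block form of $\cL_0$: here $A = H_V$, $B = I$, and $C = \nu D$ with $D = \frac{d}{dx}$ on $\dom(D) = W^{1,2}(\R)$. The one point needing care is the adjoint $C^*$. Since $D$ is skew-adjoint on $L^2(\R)$, i.e.\ $D^* = -D$, and $\nu$ is real, I get $C^* = -\nu D$; this is consistent with the $(2,2)$-entry $-\I C^* = \I \nu D$ read off from $\cL_0$. Substituting $A = H_V$, $B = I$, $C = \nu D$, and $C^* = -\nu D$ into the abstract $T(z)$ and simplifying then reduces it to the family \eqref{eq:T}.

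It remains to confirm that $\overline{T(z)}$ is a well-defined closed operator, so that the condition $0 \in \sigma(\overline{T(z)})$ is meaningful; this uses the mapping properties recorded just before Lemma \ref{lem:4.03a}, namely that $\nu DH_V^{-1}$ and $\overline{H_V^{-1}D}$ are bounded on $L^2(\R)$ with ranges contained in $W^{1,2}(\R)$. Consequently $T(z)$ is $I$ plus an operator whose closure is bounded, so $\overline{T(z)}$ is bounded for every $z \in \C$. With $T(z)$ thereby identified, the asserted equivalences for $\sigma$ and for $\sigma_i$, $i \in \{{\rm p,c,ess}\}$, are precisely Theorem \ref{th:schur_L} transcribed to the present coefficients. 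The hardest part—modest as it is—will be the bookkeeping in computing $C^*$ and verifying that the closure is well-defined; the substantive spectral content is inherited wholesale from Theorem \ref{th:schur_L}, which is exactly why this is an immediate corollary.
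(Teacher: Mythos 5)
Your strategy---read off $A=H_V$, $B=I$, $C=\nu D$, $C^{*}=-\nu D$ and transcribe Theorem \ref{th:schur_L}---is exactly the route the paper intends (it offers nothing beyond ``immediate corollary''), and your side remarks on $C^{*}$ and on the boundedness of $\overline{T(z)}$ are correct. The gap is in the single substantive step, which you assert but never carry out: the substitution does \emph{not} produce the family \eqref{eq:T}. Indeed,
\begin{equation*}
B-(C+\I z)A^{-1}(C^{*}-\I z)\;=\;I-(\nu D+\I z)H_V^{-1}\bigl(-\nu D-\I z\bigr)\;=\;I+(\nu D+\I z)H_V^{-1}(\nu D+\I z),
\end{equation*}
with the factor $(\nu D+\I z)$ on \emph{both} sides of $H_V^{-1}$, whereas \eqref{eq:T} carries $(\nu D-\I z)$ on the left. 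These are genuinely different operators, not two ways of writing the same thing.

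The discrepancy is not cosmetic, and a quick sanity check would have exposed it. Take $V\equiv 0$ and pass to the Fourier transform: the symbol of the operator in \eqref{eq:T} is
$\bigl((1-\nu^{2})\lambda^{2}+m^{2}+z^{2}\bigr)/(\lambda^{2}+m^{2})$,
which vanishes only for purely imaginary $z$ with $|z|\ge m$; so \eqref{eq:T} would force $\sigma(\cL)\subseteq\I\R$, contradicting both Corollary \ref{cor:SpecL} (real spectrum under $\cJ$-positivity) and the direct computation of the eigenvalues $\nu\lambda\pm\sqrt{\lambda^{2}+m^{2}}$ of the symbol \eqref{eq:A0}. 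The corrected family $I+(\nu D+\I z)H_V^{-1}(\nu D+\I z)$ has symbol $1-(\nu\lambda \mp z)^{2}/(\lambda^{2}+m^{2})$ and reproduces precisely that real spectrum. So you must either prove the corollary with \eqref{eq:T} replaced by the operator the specialization actually yields (in which case the statement is indeed verbatim Theorem \ref{th:schur_L}), or explain why the printed \eqref{eq:T} is nevertheless equivalent---and it is not. As written, your proof certifies a formula that the cited theorem does not deliver; the ``bookkeeping'' you identified as the hardest part is exactly where the argument breaks.
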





\begin{theorem}\label{lem:l_def}
Assume the conditions of Lemma \ref{lem:4.03a}. Then the operator $\cL$ is difinitizable and $\infty$ is its singular critical point.
\end{theorem}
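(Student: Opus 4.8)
The plan is to treat the two assertions separately: that $\rho(\cL)\neq\emptyset$ (which by Corollary \ref{cor:deftzbl} is exactly definitizability), and that the linear resolvent growth of Lemma \ref{lem:lrg} breaks down precisely as $z\to\infty$, forcing $\infty$ to be singular. Throughout I write $T_0(z)$ for the operator \eqref{eq:T} with $V\equiv0$ and use that, by relative form-compactness of $V$ (the mechanism already exploited in Corollary \ref{cor:4.04}), the difference $\overline{T(z)}-\overline{T_0(z)}$ is compact for each fixed $z$, so that $\sigma_{\rm ess}(\cL)=\sigma_{\rm ess}(\cL_0)$ via Corollary \ref{cor:4.07}.

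For definitizability I would first locate the spectral gap of the free operator. Diagonalizing the constant-coefficient $\cL_0$ by the Fourier transform, its symbol $\hat\cL_0(\lambda)$ satisfies $\det(\hat\cL_0(\lambda)-z)=(\nu\lambda+z)^2-(\lambda^2+m^2)$, whence $z=-\nu\lambda\pm\sqrt{\lambda^2+m^2}$ and $\sigma(\cL_0)=\R\setminus G$ with $G=(-m\sqrt{1-\nu^2},\,m\sqrt{1-\nu^2})$, a nonempty open interval containing $0$. For real $s\in G$ the operator $\overline{T(s)}$ is self-adjoint, and since $s\notin\sigma(\cL_0)$ we have $0\in\rho(\overline{T_0(s)})$, so that $0\notin\sigma_{\rm ess}(\overline{T(s)})=\sigma_{\rm ess}(\overline{T_0(s)})$; hence $0$ can lie in $\sigma(\overline{T(s)})$ only as an eigenvalue, and $\sigma(\cL)\cap G=\sigma_{\rm p}(\cL)\cap G$. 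Because $\cA=\cA^*$ has only finitely many negative squares (Corollary \ref{cor:4.04}), eigenvectors of $\cL=\cJ\cA$ for distinct real eigenvalues are mutually orthogonal in the indefinite inner product $\langle\cA\,\cdot\,,\cdot\rangle$, which has finite negative index, so at most countably many of them occur; thus $\sigma_{\rm p}(\cL)\cap\R$ is countable. As $G$ is uncountable, there is $s\in G\cap\rho(\cL)$, and Corollary \ref{cor:deftzbl} yields definitizability.

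For the singular critical point, the idea is to produce resolvent growth violating $\|(\cL-z)^{-1}\|\le K/|\im z|$ along the ray $z=\I t$, $t\to+\infty$. From the block form of the resolvent recorded in the proof of Lemma \ref{lem:lrg}, whose $(2,1)$-entry is $-\I\,(\overline{T(z)})^{-1}$, one has $\|(\cL-z)^{-1}\|\ge\|(\overline{T(z)})^{-1}\|$. In the free case the $(2,1)$-entry of $(\hat\cL_0(\lambda)-\I t)^{-1}$ equals $\I(\lambda^2+m^2)/\big((\nu\lambda+\I t)^2-(\lambda^2+m^2)\big)$, whose modulus tends to $1/(1-\nu^2)$ as $\lambda\to\infty$; hence $\|(\cL_0-\I t)^{-1}\|\ge 1/(1-\nu^2)$ for every $t$, so $|\im(\I t)|\,\|(\cL_0-\I t)^{-1}\|\ge t/(1-\nu^2)\to\infty$, exactly as in Example \ref{ex:03}. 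To reach general $V$ I would build high-frequency quasimodes, i.e. normalized $u_k$ with frequencies tending to $\infty$ for which $H_Vu_k$ agrees with $H_0u_k$ up to a negligible error, so that $T(\I t)u_k=(1-\nu^2)u_k+o(1)$ and therefore $\|(\overline{T(\I t)})^{-1}\|\ge 1/(1-\nu^2)-o(1)$. Then $t\,\|(\cL-\I t)^{-1}\|\to\infty$, the resolvent fails linear growth exactly as $|z|\to\infty$, and by the resolvent characterization of regularity of the critical point at infinity (cf. \cite{lan84}, \cite{cn}) the point $\infty$ is singular.

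The main obstacle is the transfer of the free high-frequency lower bound to the perturbed operator. The factor $\nu D-t$ flanking $H_V^{-1}$ in $\overline{T(\I t)}$ (see \eqref{eq:T}) is unbounded, so estimating $(\nu D-t)(H_V^{-1}-H_0^{-1})(\nu D-t)u_k$ merely through boundedness of $VH_V^{-1}$ yields an error that grows with the frequency rather than vanishing. I would instead center the quasimodes $u_k$ far out in $x$, where $\int_x^{x+1}|V|$ is small by \eqref{eq:Vcomp}, and run a diagonal argument in frequency and center so that the $V$-contribution is genuinely negligible; an alternative would be to invoke stability of the regularity of $\infty$ under relatively compact perturbations. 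A secondary point to verify is that the resolvent criterion for regularity of $\infty$ applies without assuming $\sigma(\cL)\subseteq\R$: it only concerns $(\cL-z)^{-1}$ for large $|z|$, where by Corollary \ref{cor:SpecL} the finitely many non-real eigenvalues are irrelevant.
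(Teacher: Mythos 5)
Your overall strategy (definitizability via $\rho(\cL)\neq\emptyset$ together with finitely many negative squares of $\langle\cA\,\cdot\,,\cdot\,\rangle$, and singularity of $\infty$ via failure of the linear resolvent growth bound, computed on the Fourier side for $V\equiv0$) is the same as the paper's, and your free computation of the $(2,1)$-entry of the resolvent reproduces the paper's lower bound $1/(1-\nu^2)$. The decisive gap is in the second half. You correctly identify that the lower bound must be transferred from $H_0$ to $H_V$, correctly observe that the naive estimate fails because the unbounded factors $\nu D\pm t$ flanking $H_V^{-1}-H_0^{-1}$ destroy the smallness coming from \eqref{eq:Vcomp}, and then you stop: you propose two possible fixes (quasimodes centered where $\int_x^{x+1}|V|$ is small, or a stability theorem for the regularity of $\infty$ under relatively compact perturbations) without carrying out either. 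The paper takes the second route, invoking \cite[Theorem 4.1]{cur}, which states precisely that $\infty$ is a singular critical point of $\cL$ if and only if it is one for the operator with $V\equiv0$; after that the free computation suffices, because the free operator is $\cJ$-positive with $0\in\rho(\cL)$, so $\infty$ is its only critical point and failure of similarity to a self-adjoint operator is equivalent to singularity of $\infty$. Without such a perturbation result (or a completed quasimode construction, which is delicate for exactly the reason you name), the second assertion is not proved. Your ``secondary point'' is likewise a real issue: for general $V$ the operator $\cL$ need not be $\cJ$-nonnegative, so failure of the global bound \eqref{eq:lrg} does not by itself single out $\infty$ as the culprit; one needs a criterion local at $\infty$, which you gesture at but do not supply.

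The first half also has a soft spot. Your route to $\rho(\cL)\neq\emptyset$ --- locate the real gap $(-m\sqrt{1-\nu^2},\,m\sqrt{1-\nu^2})$ of the free symbol, use compactness of $\overline{T(s)}-\overline{T_0(s)}$ to exclude $0$ from $\sigma_{\rm ess}(\overline{T(s)})$, and then argue that the real point spectrum is countable --- is workable, but the countability claim is not a one-liner. Mutual $\cA$-orthogonality of eigenvectors together with $\kappa_-(\cA)<\infty$ bounds only the number of negative and neutral eigenvectors; to rule out an uncountable family of positive ones you must push the family through $|\cA|^{1/2}$, separate off the finite-rank negative part, and derive a uniform separation contradicting separability of $\gH$. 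As written this step is asserted rather than proved. The paper avoids the detour entirely: conjugating $\overline{T(\I y)}$ by $(\nu D\pm y)^{-1}$ yields $(\nu^2D^2-y^2)^{-1}+H_V^{-1}$, whose first summand has norm at most $y^{-2}$, so $\I y\in\rho(\cL)$ for all sufficiently large $y>0$ since $0\in\rho(H_V)$. Adopting that two-line argument would both close this gap and hand you the ray $z=\I y$ along which the resolvent growth test is subsequently applied.
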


\begin{proof}
By Lemma \ref{lem:4.03a} and Corollary \ref{cor:4.04}, the operator $\cL$ is $\cJ$-self-adjoint and the form $\langle \cJ\cL\,\cdot\,,\cdot\,\rangle = \langle \cA\,\cdot\,,\cdot\,\rangle$ has finitely many negative squares. Therefore, $\cL$ is definitizable if $\rho(\cL)\neq \emptyset$. By Corollary \ref{cor:4.07}, we need to show that there is $z\in \C$ such that the operator $T(z)$ is boundedly invertible. Set $z=\I y$ with $y>0$. The operators $\nu D + y$ and $\nu D - y$ are boundedly invertible in $L^2(\R)$. Therefore, we get 
\be\label{eq:4.17}
(\nu D+y)^{-1} \overline{T(\I y)} (\nu D- y)^{-1} = (\nu^2D^2 - y^2)^{-1} + H_V^{-1}.
\ee
Since $-\nu^2 D^2 + y^2\ge y^2I$, we get $\|(\nu^2D^2 - y^2)^{-1}\|\le 1/y^{-2}$. Therefore, the left-hand side in \eqref{eq:4.17} is a boundedly invertible operator for $y>0$ sufficiently large since $0\in \rho(H_V)$. It remains to note that 
\be
(\overline{T(\I y)})^{-1} =(\nu D+y)^{-1} [(\nu^2D^2 - y^2)^{-1} + H_V^{-1}]^{-1} (\nu D- y)^{-1}. 
\ee
Therefore, $\I y\in \rho(\overline{T})$ for all sufficiently large $y>0$.

By \cite[Theorem 4.1]{cur}, $\infty$ is a singular critical point of $\cL$ if and only if $\infty$ is a singular critical point of $\cL$ with $V\equiv 0$. That is, it suffices to show that $\infty$ is a singular critical point of the operator $\cL=\overline{\cL_0}$, where $\cL_0$ is given in $L^2(\R)\times L^2(\R)$  by
\be
\cL_0= \begin{pmatrix}
\I \nu D & \I I \\
-\I H_0 & \I \nu D
\end{pmatrix}=\begin{pmatrix}
\I\nu\frac{d}{dx} & \I I \\
\I \frac{d^2}{dx^2} - \I m^2 & \I\nu\frac{d}{dx}
\end{pmatrix}.
\ee
 Now using the Fourier transform we see  that $\cL_0$ is unitarily equivalent to the multiplication operator acting in $L^2(\R)\oplus L^2(\R)$ and defined by
\be\label{eq:A0}
(\hat{\cL}_0 f)(\lambda)=\begin{pmatrix}
\nu \lambda  & \I \mathbf{1}\\
-\I (\lambda^2+m^2) & \nu \lambda 
\end{pmatrix}f(\lambda)= \begin{pmatrix}
\nu \lambda f_1(\lambda)+\I f_2(\lambda)\\
-\I(\lambda^2+m^2)f_1(\lambda) +\nu \lambda f_2(\lambda)
\end{pmatrix}.
\ee
The operator $\hat{\cL}_0$ is a particular case of the operator considered in Example \ref{ex:3.02} with $a(\lambda)=\lambda^2+m^2$, $b(\lambda)=1$ and $c(\lambda) = \nu\lambda$. Clearly, setting $z=\I y$ in \eqref{eq:3.20}, we get
\[
\Big\|\frac{a}{{ab - (c+ y)(c^*-y)}} \Big\|_{L^\infty}= \Big\|\frac{\lambda^2+m^2}{(1-\nu^2)\lambda^2+m^2 +y^2 - 2\I y \nu\lambda}\Big\|_{ L^\infty}\ge \frac{1}{1-\nu^2}.
\]
Therefore, there is no $K>0$ such that \eqref{eq:3.20} holds true. Hence the LRG test \eqref{eq:lrg} for the operator $\cL$ fails and hence $\cL$ is not similar to a self-adjoint operator. It remains to note that $\cL$ is $\cJ$-positive with $0\in\rho(\cL)$ if $V\equiv 0$. Therefore, $\infty$ is the only critical point of $\cL$. Since $\cL$ is not similar to a self-adjoint operator, $\infty$ is a singular critical point of $\cL$.
\end{proof}
 

 \noindent
{\bf Acknowledgements.} I am deeply grateful to Branko \'Curgus, Andreas Fleige, Alexander Komech and Elena Kopylova for hints with respect to the literature and fruitful discussions. 

\end{document}